
 \documentclass[11pt]{amsart}


\usepackage{amssymb}
\usepackage{bbm}
\usepackage{anysize}
\usepackage{graphicx}
\marginsize{1in}{1in}{1in}{1in}

\usepackage{pdfsync}
\usepackage{color}
\newtheorem{tm}{Theorem}[section]

\newtheorem{defin}[tm]{Definition} 
\newtheorem{exmp}[tm]{Example}
\newtheorem{lem}[tm]{Lemma}
\newtheorem{assumption}[tm]{Assumption}
\newtheorem{rk}[tm]{Remark}

\numberwithin{equation}{section}





\pagestyle{plain}


\begin{document}


\title{Path stability of the solution of  stochastic differential equation driven by time-changed L\'evy noises}

\author{ERKAN NANE}
\address{ERKAN NANE: Department of Mathematics and Statistics,
Auburn University,
Auburn, AL 36849 USA}
\email{ezn0001@auburn.edu}

\author{YINAN NI}
\address{YINAN NI: Department of Mathematics and Statistics,
Auburn University,
Auburn, AL 36849 USA}
\email{yzn0005@auburn.edu}

\begin{abstract}
This paper studies path stabilities of the solution to stochastic differential equations (SDE) driven by time-changed L\'evy noise. The conditions for the solution of time-changed SDE to be path stable and exponentially path stable are given. Moreover, we reveal the important role of the time drift in determining the path stability properties of the solution. Related examples are provided.
\end{abstract}

\maketitle

\noindent {\bf Keywords:} Path stability;exponential path stability; time-changed L\'evy noise; SDEs driven by  time-changed L\'evy; Lyapunov function method.

\section{Introduction}

Study  of stochastic differential equations (SDE) is a mature  field of research. Numerous types of SDEs have been used to model different phenomena in various areas, such as unstable stock prices in finance \cite{merton}, dynamics of biological systems \cite{jha}, and Kalman filter in navigation control. In 1892, Lyapunov \cite{lyap} introduced the concept of stability of a dynamical system. Since then, the concept of stability have been studied widely in different senses, including stochastical  stability, almost sure stability, exponential stability, etc. In \cite{maotext}, Mao investigated various types of stabilities  for the following SDE
\begin{equation}
dX(t)=f(X(t))dt+g(X(t))dB(t),\ t\geq 0,
\end{equation}
with $X(0)=x_0$, where $B$ is the standard Brownian motion.

Siakalli \cite{sia} extended Mao's results to SDEs driven by L\'evy noise
\begin{equation}
dX(t)=f(X(t-))dt+g(X(t-)dB(t)+\int_{|y|<c}h(X(t-),y)\tilde{N}(dt,dy),\ t\geq 0,
\end{equation}
with $X(0)=x_0$, where $\tilde{N}$ is the compensated Poisson measure. This type of SDEs provide as a tool of modeling the price of financial assets with continuous change. However, we also observe such special behavior in financial market that prices are on the same level during a period of time, see Figure \ref{constantgraph}. But this phenomena can be modeled by the time-changed SDEs, which allow more flexibility in modelling and thus become popular among researchers, see \cite{marcin} and  \cite{qwa}.

\begin{figure}\label{constantgraph}
\begin{center}
\caption{Log price of the Kalev stock \cite{jawy}}
\includegraphics[scale=.66]{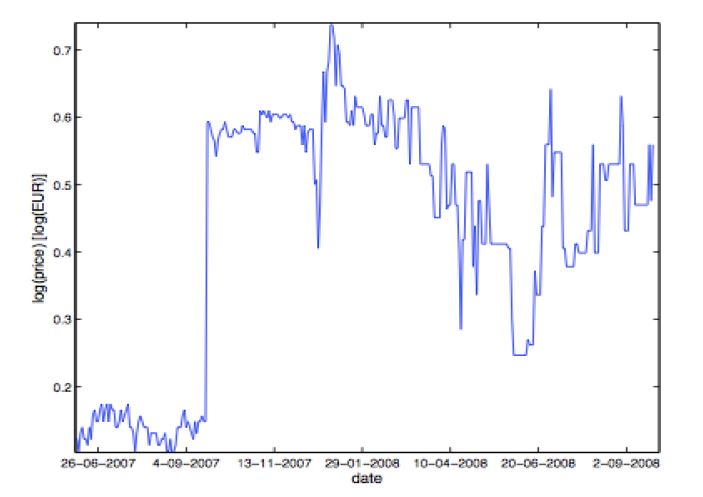}
\end{center}
\end{figure}

 Kobayashi \cite{keib} introduced the duality theorem between time-changed SDEs and the
corresponding non-time-changed SDEs, and established the It\^o formula for  time-changed SDEs. Soon after Kobayashi's fruitful results, Wu \cite{qwa}
established the stochastic and moment stabilities of the solution to the SDEs driven by time-changed Brownian motion
\begin{equation}
dX(t)=k(t,E_t,X(t-))+f(t,E_t,X(t-))dt+g(t,E_t,X(t-))dB_{E_t},\ t\geq 0,
\end{equation}
with $X(0)=x_0$, where $E_t$ is specified as the inverse of an $\alpha$-stable subordinator, $\alpha\in(0,1)$.
In our recent paper \cite{erni2}, we focus on the following time-changed SDE
\begin{equation}\label{SDE}
\begin{aligned}
dX(t)&=f(t, E_t, X(t-))dt+k(t, E_t, X(t-))dE_t+g(t, E_t, X(t-))dB_{E_t}\\
&+\int_{|y|<c}h(t, E_t, X(t-),y)\tilde{N}(dE_t,dy),
\end{aligned}
\end{equation}
with $X(t_0)=x_0$, where $E_t$ is the inverse of a strictly increasing subordinator,  and discuss stability of its solution in probability and moment senses, including stochastical  stability, stochastical asymptotic stability, global stochastic asymptotic stability, $p$th moment exponential stability and $p$th moment asymptotic stability.

In this paper, we analyze the path stabilities of the solution to \eqref{SDE} and the following stochastic differential equation with linear jumps
\begin{equation}\label{aimfinal}
\begin{aligned}
dX(t)&=f(t, E_t, X(t-))dt+k(t, E_t, X(t-))dE_t+g(t, E_t, X(t-))dB_{E_t}\\
&+\int_{|y|<c}h(y)X(t-)\tilde{N}(dE_t,dy)+\int_{|y|\geq c}H(y)X(t-)N(dE_t,dy).
\end{aligned}
\end{equation}
with $X(t_0)=x_0$, where $E_t$ is the inverse of a "mixed" subordinator.

In the remaining parts of this paper, further needed concepts and related background will be given
in section 2. In section 3, the conditions for the solution to our target time-changed
SDEs to be almost sure exponential path stability and almost sure path stability will be given. Connections between stability of  the solution to time-changed SDE and that of the  corresponding non-time-changed SDE will be disclosed and some examples will be provided.

\section{Preliminaires}

Let $(\Omega, \mathcal{F},(\mathcal{F}_t),P)$ be a filtered probability space satisfying usual hypotheses of completeness and right continuity. Assume that $\mathcal{F}_t$-adapted Poisson random measure $N$ on $\mathbb{R}_+\times (\mathbb{R}-\{0\})$ is independent of the drift and the standard Brownian motion, define its  compensator $\tilde{N}(dt,dy)=N(dt,dy)-\nu(dy)dt$,  where $\nu$ is a L\'evy measure satisfying $\int_{\mathbb{R}-\{0\}}(|y|^2 \land 1)\nu(dy)<\infty$.

Let $\{D(t),t\geq 0\}$ be a RCLL increasing L\'evy process that is called subordinator starting from 0 with Laplace transform
\begin{equation}
\mathbbm{E}e^{-s D(t)}=e^{-t\psi(s)},
\end{equation}
where Laplace exponent $\psi(s)=\int_0^\infty(1-e^{-s x})\nu(dx)$.

Define its inverse
\begin{equation}\label{inverse-E-process}
E_t:=\inf\{ \tau>0: D(\tau)>t\}.
\end{equation}

The concept of regular variation is needed to introduce the mixed stable subordinator. A measurable function R is regularly varying at infinity with exponent $\gamma\in \mathbb{R}$, denoted by $R\in RV_\infty (\gamma)$, if R is eventually positive and $R(c t)/R(t)\rightarrow c^\gamma$ as $t\rightarrow \infty$, for any $c>0$. Similarly, a measurable function R is regularly varying at zero with exponent $\gamma\in \mathbb{R}$, denoted by $R\in RV_0 (\gamma)$, if R is positive in some neighborhood of zero and $R(ct)/R(t)\rightarrow c\gamma$ as $t\rightarrow 0$, for any $c>0$.

Given a measurable function $p:(0,1)\rightarrow \mathbb{R}_+$ such that $p\in RV_0(\gamma-1)$ for some $\gamma>0$, let $L(u)=C\int_0^1 u^{-\alpha}p(\alpha)d\alpha$ and $C^{-1}=\int_0^1 p(\alpha)d\alpha$. Without loss of generality, let $C=1$, then $p$ is a probability density of L\'evy measure of the $\alpha$-stable subordinators. Let $\{D(t)\}_{t\geq 0}$ be a subordinator such that $D(1)$ has L\'evy-Khinchin representation $[0,0,\phi]$ and the L\'evy measure $\phi$ is defined as $\phi(u,\infty)=L(u)$, then $\{D(t)\}_{t\geq 0}$ is the so called "mixed" stable subordinator. In this case the Laplace exponent is given by
\begin{equation}\label{laplace-exponent-mixture-log}
\psi(s)=\int_0^1\Gamma(1-\beta) s^\beta p(\beta)d\beta
\end{equation}

By Theorem 3.9 in \cite{mmhs}, there exists a function $L\in RV_\infty(0)$ such that
\begin{equation}\label{expofet}
\mathbb{E}[E(t)]\sim (\log t)^\gamma L(\log t)^{-1}\ \ as\ t\rightarrow \infty.
\end{equation}

We require $f,k,g,h,H$ in \eqref{SDE} and \eqref{aimfinal} to be real-valued functions and satisfy the following Lipschitz condition in Assumption \ref{lip}, growth condition in Assumption \ref{linear} and Assumption \ref{tec}. Under these assumptions, by Lemma 4.1 in \cite{keib}, both of the equations  \eqref{SDE} and \eqref{aimfinal} have  unique $\mathcal{G}_t=\mathcal{F}_{E_t}$-adapted solution processes $X(t)$.

\begin{assumption}\label{lip}
(Lipschitz condition)
There exists a positive constant $K_1$ such that
\begin{equation}
\begin{aligned}
&\Big|f(t_1,t_2,x)-f(t_1,t_2,y)\Big|^2+\Big|k(t_1,t_2,x)-k(t_1,t_2,y)\Big|^2+\Big|g(t_1,t_2,x)-g(t_1,t_2,y)\Big|^2\\
&+\int_{|z|<c}\Big|h(t_1,t_2,x,z)-h(t_1,t_2,y,z)\Big|^2\nu(dz)\leq K_1|x-y|^2,
\end{aligned}
\end{equation}
for all $t_1,t_2\in \mathbb{R}_+$ and $x,y\in \mathbb{R}$.
\end{assumption}

\begin{assumption}\label{linear}
(Growth condition)
There exists a positive constant $K_2$ such that, for all $t_1,t_2\in \mathbb{R}_+$ and $x\in \mathbb{R}$,
\begin{equation}
|f(t_1,t_2,x)|^2+|k(t_1,t_2,x)|^2+|g(t_1,t_2,x)|^2+\int_{|y|<c}|h(t_1,t_2,x,y)|^2\nu(dy)\leq K_2(1+|x|^2).
\end{equation}
\end{assumption}

\begin{assumption}\label{tec}
If $X(t)$ is  right continuous with left limits (rcll) and  a $\mathcal{G}_t$-adapted process, then
\begin{equation}
f(t, E_t, X(t)), k(t, E_t, X(t)), g(t, E_t, X(t)),h(t, E_t, X(t),y)\in \mathcal{L}(\mathcal{G}_t),
\end{equation}
where  $\mathcal{L}(\mathcal{G}_t)$ denotes the class of rcll  and $\mathcal{G}_t$-adapted processes.
\end{assumption}

Note that the Stochastic differential equation \eqref{SDE} involves only L\'evy process with small jumps and general scalars for the drift and the standard Brownian motion and Poisson jump; while the linear stochastic differential equation \eqref{aimfinal} contains both small and large Poisson jumps with linear scalars. Next, we define two different types of stability.

\begin{defin} (Definition 3.1 in \cite{maotext})
The trivial solution of the time-changed SDE \eqref{SDE} is said to be almost surely exponentially  path stable if
\begin{equation}
\limsup_{t\rightarrow \infty}\frac{1}{t}\log|X(t;t_0,x_0)|<0\ \   a.s.
\end{equation}
for all $x_0\in R$.
\end{defin}

\begin{defin}
The trivial solution of the time-changed SDE \eqref{SDE} is said to be almost surely path stable if there exists a function $\nu(t):[0,\infty)\rightarrow [0,\infty)$  such that
\begin{equation}
\lim_{t\rightarrow \infty}\nu(t)=\infty,
\end{equation}
and
\begin{equation}
\limsup_{t\rightarrow \infty}\frac{1}{\nu(t)}\log|X(t;t_0,x_0)|<0\ \   a.s.
\end{equation}
for all $x_0\in R$.
\end{defin}

The It\^o formula is heavily used in our proofs. We derive the following It\^o formula for time-changed L\'evy noise and will utilize it frequently in the remaining sections.

\begin{lem}({It\^o formula for time-changed L\'evy noise})\label{itofor}
Let $D(t)$ be a rcll subordinator and its inverse process $E_t:=\inf\{ \tau>0: D(\tau)>t\}$. Define a filtration $\{\mathcal{G}_t\}_{t\geq 0}$ by $\mathcal{G}_t=\mathcal{F}_{E_t}$. Let $X$ be a process satisfying the following:
\begin{equation}
\begin{aligned}\label{sdelevy}
X(t)&=x_0+\int_{t_0}^tf(s, E_s, X(s-))ds+\int_{t_0}^tk(s, E_s, X(s-))dE_s+\int_{t_0}^tg(s, E_s, X(s-))dB_{E_s}\\
&+\int_{t_0}^t\int_{|y|<c}h(s, E_s, X(s-),y)\tilde{N}(dE_s,dy)+\int_{t_0}^t\int_{|y|\geq c}H(s, E_s, X(s-),y)N(dE_s,dy),
\end{aligned}
\end{equation}
where $f,k,g,h,H$ are measurable functions such that all integrals are defined, $c$ is a positive constant.

Then, for all $F : \mathbb{R}_+\times \mathbb{R}_+\times \mathbb{R}\rightarrow \mathbb{R}$ in $C^{1,1,2}(\mathbb{R}_+\times \mathbb{R}_+\times \mathbb{R},\mathbb{R})$,  we have with probability one,
\begin{equation}
\begin{aligned}\label{itolevy}
F(t, E_t, &X(t))-F({t_0},E_{t_0},x_0)=\int_{t_0}^t L_1F(s, E_s, X(s-))ds+\int_{t_0}^t L_2F(s, E_s, X(s-))dE_s\\
&+\int_{t_0}^t\int_{|y|<c}\Big[F(s, E_s, X(s-)+h(s, E_s, X(s-),y))-F(s, E_s, X(s-))\Big]\tilde{N}(dE_s,dy)\\
&+\int_{t_0}^t\int_{|y|\geq c}\Big[F(s, E_s, X(s-)+H(s, E_s, X(s-),y))-F(s, E_s, X(s-))\Big]N(dE_s,dy)\\
&+\int_{t_0}^t F_x(s, E_s, X(s-))g(s, E_s, X(s-))dB_{E_s},
\end{aligned}
\end{equation}
where
\begin{equation}
\begin{aligned}\label{linearop}
L_1F(t_1,&t_2,x)=F_{t_1}(t_1,t_2,x)+F_{x}(t_1,t_2,x)f(t_1,t_2,x),\\
L_2F(t_1,&t_2,x)=F_{t_2}(t_1,t_2,x)+F_{x}(t_1,t_2,x)k(t_1,t_2,x)+\frac{1}{2}g^2(t_1,t_2,x)F_{xx}(t_1,t_2,x)\\
+&\int_{|y|<c}\Big[F(t_1,t_2,x+h(t_1,t_2,x,y))-F(t_1,t_2,x)-F_x(t_1,t_2,x)h(t_1,t_2,x,y)\Big]\nu(dy).
\end{aligned}
\end{equation}
\end{lem}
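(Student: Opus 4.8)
The plan is to derive \eqref{itolevy} by reducing the time-changed problem to a classical It\^o formula via Kobayashi's duality, exactly as in \cite{keib}. First I would introduce the ``un-time-changed'' process: set $Y(u) = X(D(u-))$ (more precisely, work with the process driven by the original clock), so that by the change-of-variables identities $\int_{t_0}^t \Phi(s)\,dE_s = \int_{E_{t_0}}^{E_t}\Phi(D(u-))\,du$ and $\int_{t_0}^t \Psi(s)\,dB_{E_s} = \int_{E_{t_0}}^{E_t}\Psi(D(u-))\,dB_u$, and similarly for the Poisson integrals $N(dE_s,dy)$ against the time-changed compensated measure, the process $Y$ satisfies an ordinary (non-time-changed) SDE driven by $B$ and $N$ with coefficients evaluated at $(D(u-),u,Y(u-))$. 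The key structural point is that $E_t$ is continuous and of finite variation, so $dE_s$ contributes no quadratic variation and no jumps; all the randomness with a martingale part comes through $B_{E_s}$ and $\tilde N(dE_s,dy)$.

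Next I would apply the standard It\^o formula for semimartingales with jumps (e.g.\ the L\'evy-type It\^o formula underlying \cite{sia}, \cite{maotext}) to $F(D(u-), u, Y(u))$ in the $u$-variable. Here $F$ has three arguments; the first slot $t_1$ receives $D(u-)$ which is itself a subordinator (finite variation, with jumps), the second slot $t_2$ receives $u$ (smooth), and the third slot receives $Y(u)$. Because $D$ enters the first argument and is pure-jump of finite variation, the It\^o expansion produces: an absolutely continuous term $F_{t_2}\,du$ from the $u$-slot; the usual $F_x$-drift, $\tfrac12 g^2 F_{xx}$, and compensated-jump correction terms from the $Y$-dynamics and the continuous martingale part; plus jump terms accounting for the simultaneous jumps of $D$ (in the first slot) and of $Y$ (through $h$, $H$). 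The identity $\int F_{t_1}(D(u-),u,Y(u-))\,dD(u) = \int_{t_0}^t F_{t_1}(s,E_s,X(s-))\,ds$ — i.e.\ pushing the $dD$ integral back through the inverse to get $ds$ — is what converts the $D$-jump contribution into the $\int_{t_0}^t L_1 F\,ds$ term; one must check that the jumps of $D$ do not interact with jumps of $Y$, which holds because $Y$ is driven by $B$ and $N$ (independent of $D$) and the jump times are a.s.\ disjoint.

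Finally I would change variables back from $u$ to $s$ (replacing $E_t$ as the upper limit, using $D(E_s) = s$ a.e.\ and $D(u-)\mapsto s$), collect the terms into the operators $L_1$ and $L_2$ as defined in \eqref{linearop}, and verify that the stochastic integrals transform correctly: $\int g F_x\,dB_u \mapsto \int g F_x\,dB_{E_s}$ and $\int[\cdots]\tilde N(du,dy)\mapsto \int[\cdots]\tilde N(dE_s,dy)$. The separation of the compensated small-jump integral (which carries the Taylor remainder $F(x+h)-F(x)-F_x h$ into $L_2$) from the uncompensated large-jump integral (which keeps the raw increment $F(x+H)-F(x)$) mirrors the split in \eqref{sdelevy} and needs only the bookkeeping of adding and subtracting the compensator $\nu(dy)\,dE_s$.

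The main obstacle I anticipate is the rigorous justification of the time-change in the stochastic integrals — specifically, that $\{\mathcal{G}_t\}=\{\mathcal{F}_{E_t}\}$ makes $B_{E_t}$ a $\mathcal{G}_t$-martingale and $\tilde N(dE_t,dy)$ a $\mathcal{G}_t$-martingale measure, so that the optional-stopping/time-substitution theorems for stochastic integrals apply (this is the content of Kobayashi's framework and must be invoked carefully), together with checking the $C^{1,1,2}$ regularity of $F$ suffices despite $D$ sitting in the first argument — here only $C^1$ in $t_1$ is available and used, which is consistent because $D$ contributes no quadratic variation. Everything else is careful but routine bookkeeping once the duality is set up.
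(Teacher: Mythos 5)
The paper does not actually prove this lemma: it states that the proof ``follows by similar ideas as in the proof of Lemma 3.1 in \cite{erni2}'' and omits the details. The argument in that reference (and in Kobayashi \cite{keib}) is the \emph{direct} one: regard $(t,E_t,X(t))$ as a three-component semimartingale in calendar time $t$, observe that $t$ and $E_t$ are both \emph{continuous} processes of finite variation (so they contribute only first-order terms $F_{t_1}\,dt$ and $F_{t_2}\,dE_t$ and no quadratic variation or jumps), compute $d[X^c,X^c]_t=g^2\,dE_t$ from the time-changed Brownian integral, and split the jump measure into the compensated small-jump part (producing the Taylor-remainder integral inside $L_2F$) and the uncompensated large-jump part. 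Your proposal instead inverts the clock and applies It\^o in the business time $u$ to $F(D(u-),u,Y(u))$, and this is where a genuine gap appears: $D$ is a jump process sitting in the \emph{first} argument of $F$, and $F$ is only $C^1$ there, so the It\^o expansion in $u$ must carry the exact increments $F(D(u),u,Y(u))-F(D(u-),u,Y(u))$ over the jumps of $D$; your claimed identity
\begin{equation*}
\int F_{t_1}(D(u-),u,Y(u-))\,dD(u)=\int_{t_0}^{t}F_{t_1}(s,E_s,X(s-))\,ds
\end{equation*}
implicitly replaces those exact increments by the first-order Taylor term $F_{t_1}\,\Delta D(u)$, which is not valid for a non-infinitesimal jump unless $F$ is affine in $t_1$. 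The statement can be repaired --- on each flat interval $[D(u-),D(u))$ one has $E_s\equiv u$ and $X(s)\equiv Y(u)$ frozen, so the exact jump increment equals $\int_{D(u-)}^{D(u)}F_{t_1}(s,E_s,X(s))\,ds$ by the fundamental theorem of calculus, and summing over jumps plus the continuous part of $D$ does recover $\int_{t_0}^{t}F_{t_1}\,ds$ --- but as written the step is incorrect, and it is precisely the step your route adds relative to the paper's.

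Two smaller points. First, the continuity of $E_t$ (which you use to say $dE_s$ contributes no jumps) requires $D$ to be strictly increasing; this is assumed in the paper but should be flagged, since for a general rcll subordinator with intervals of constancy $E$ would jump. Second, your concern about $B_{E_t}$ and $\tilde N(dE_t,dy)$ being $\mathcal{G}_t$-(local) martingales is well placed and is exactly what Kobayashi's synchronization framework supplies; that part of your outline matches what the cited proof relies on. If you adopt the direct calendar-time It\^o formula on $(t,E_t,X(t))$ you avoid the jump-in-the-first-slot issue entirely and the rest of your bookkeeping (separating the compensated small jumps from the raw large jumps, and collecting terms into $L_1$ and $L_2$) goes through as you describe.
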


Note that the proof of the It\^o formula for time-changed L\'evy noise follows by  similar ideas  as in the proof of Lemma 3.1 in \cite{erni2}, thus the details are omitted. To perform future analysis, we need some conditions under which the solutions of \eqref{SDE} can not reach the origin after certain time $t_0$ given that $X(t_0)\neq 0$.

\begin{assumption}\label{preass1}
For any $\theta>0$ there exists $K_\theta>0$, such that
\begin{equation}\label{preassfor}
|k(x)|+|g(x)|+2\int_{|y|<c} \frac{|h(x,y)|(|x|+|h(x,y)|)}{|x+h(x,y)|} \nu(dy)\leq K_\theta |x|
\end{equation}
and
\begin{equation}\label{preassforr}
|f(x)|\leq K_\theta |x|^2\ , for\ 0<|x|\leq \theta.
\end{equation}
\end{assumption}

\begin{lem}
Given that the assumption \eqref{preass1} holds, the solution of \eqref{SDE} satisfies
\begin{equation}\label{preassfor1}
P(X(t)\neq 0\ for\ all\ t\geq t_0)=1,
\end{equation}
if $x_0\neq 0$.
\end{lem}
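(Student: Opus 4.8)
The plan is to apply the It\^o formula of Lemma \ref{itofor} to the function $F(x)=\log|x|$ (more precisely, to $\log|X(t)|$ on the random interval before the solution hits $0$), and show that the resulting semimartingale cannot be driven to $-\infty$ in finite time, which forces $X$ to avoid the origin. Fix $x_0\neq 0$ and $\theta>|x_0|$, and for $n$ large introduce the stopping times $\tau_n=\inf\{t\ge t_0:\ |X(t)|\notin(1/n,\theta)\}$. On $[t_0,\tau_n)$ the process stays in a region where $F(x)=\log|x|$ is $C^2$ and Assumption \ref{preass1} is in force, so I would compute $L_1F$ and $L_2F$ from \eqref{linearop}: here $F_x(x)=1/x$, $F_{xx}(x)=-1/x^2$, hence
\begin{equation}
L_1F(x)=\frac{f(x)}{x},\qquad
L_2F(x)=\frac{k(x)}{x}-\frac{g^2(x)}{2x^2}+\int_{|y|<c}\Big[\log\Big|1+\tfrac{h(x,y)}{x}\Big|-\frac{h(x,y)}{x}\Big]\nu(dy),
\end{equation}
while the jump term in \eqref{itolevy} contributes $\log|X(s-)+h(X(s-),y)|-\log|X(s-)|=\log|1+h/X(s-)|$ against $\tilde N(dE_s,dy)$, and the Brownian term contributes $\int g(X(s-))/X(s-)\,dB_{E_s}$.

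The key step is an a priori bound of the form $\log|X(t\wedge\tau_n)|\ge \log|x_0| - C\,(t\wedge\tau_n) - C\,E_{t\wedge\tau_n} + M_{t\wedge\tau_n}$ for a deterministic constant $C=C_\theta$ and a local martingale $M$. To get this I would bound each drift piece using \eqref{preassfor}--\eqref{preassforr}: the term $f(x)/x$ is controlled by $|f(x)|/|x|\le K_\theta|x|\le K_\theta\theta$ on the relevant region; the terms $k(x)/x$ and $g^2(x)/(2x^2)$ are controlled by \eqref{preassfor}, which gives $|k(x)|\le K_\theta|x|$ and $|g(x)|\le K_\theta|x|$; and the compensated-integral term is handled via the elementary inequality $|\log|1+u|-u|\le C(|u|\wedge u^2)+ \text{(a piece blowing up only near }u=-1)$ — this is exactly where the precise quantity $\int \frac{|h(x,y)|(|x|+|h(x,y)|)}{|x+h(x,y)|}\nu(dy)\le K_\theta|x|$ in \eqref{preassfor} is tailored to absorb the singular behaviour of $\log|x+h|-\log|x|$ when $x+h$ is small, after splitting $|y|<c$ into the regions where $|h(x,y)|\le|x|/2$ and $|h(x,y)|>|x|/2$. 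Since $E_t<\infty$ a.s. and $t<\infty$, the drift-in-$t$ and drift-in-$E_t$ contributions are a.s. finite on any $[t_0,T]$.

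From the bound, on the event $\{\tau_n\le T\}\cap\{\,|X(\tau_n)|=1/n\,\}$ we would get $\log(1/n)\ge \log|x_0|-C(T+E_T)+M_{\tau_n}$, i.e. $M_{\tau_n}\le \log(1/n)-\log|x_0|+C(T+E_T)\to-\infty$; a localization/Fatou argument (taking expectations of the stopped local martingale, or using that a continuous-in-the-time-changed-clock local martingale does not explode to $-\infty$ on a finite interval) then yields $P(\tau_n\le T,\ |X(\tau_n)|=1/n)\to0$ as $n\to\infty$. Letting $n\to\infty$ shows $P(\exists\, t\in[t_0,T]:\ X(t)=0)=0$, and then letting $T\to\infty$ and $\theta\to\infty$ gives \eqref{preassfor1}. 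The main obstacle is the careful estimation of the jump term, where one must split the L\'evy integral according to the relative size of $h(x,y)$ and $x$ so that the logarithmic singularity is exactly matched by the left-hand side of \eqref{preassfor}; the stopping-time localization and the passage $n\to\infty$ are then routine once the $E_t$-drift is recognized to be a.s. finite.
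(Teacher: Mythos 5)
Your strategy is sound and the estimates you need do follow from Assumption \ref{preass1}, but it is a genuinely different route from the paper's. The paper argues by contradiction with the Lyapunov-type function $Z(t)=e^{-\lambda E_t}|X(t)|^{-1}$, choosing $\lambda=2K_\theta+\tfrac{K_\theta^2}{2}$ so that, after invoking Siakalli's inequality $\frac{1}{|x+y|}-\frac{1}{|x|}+\frac{xy}{|x|^3}\leq \frac{2|y|(|y|+|x|)}{|x|^2|x+y|}$, the entire $dE_s$-drift becomes nonpositive and only the bounded $ds$-drift $K_\theta T$ survives (this is where $|f(x)|\le K_\theta|x|^2$ in \eqref{preassforr} is used, to make the $ds$-drift of $1/|X|$ bounded by a constant). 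Taking expectations yields $\mathbb{E}\big[e^{-\lambda E_{\tau_\epsilon\wedge T}}|X(\tau_\epsilon\wedge T)|^{-1}\big]\le |x_0|^{-1}+K_\theta T$, and the probability of the bad event is then extracted via a reverse H\"older inequality, which requires finiteness of the exponential moment $\mathbb{E}\big[e^{\lambda E_T}\big]$. Your $\log|X|$ argument replaces the upper bound on $\mathbb{E}[|X|^{-1}]$ by a lower bound on $\mathbb{E}[\log|X(\tau_n\wedge T)|]$; the same quantity $\int_{|y|<c}\frac{|h(x,y)|(|x|+|h(x,y)|)}{|x+h(x,y)|}\nu(dy)\le K_\theta|x|$ controls the logarithmic compensator, since $\log\frac{|x|}{|x+h|}\le\frac{|x|-|x+h|}{|x+h|}\le\frac{|h|(|x|+|h|)}{|x|\,|x+h|}$ and $\frac{|h|}{|x|}\le\frac{|h|(|x|+|h|)}{|x|\,|x+h|}$, so your splitting by the relative size of $h$ and $x$ is not even needed. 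A real advantage of your route is that the conclusion only requires $\mathbb{E}[E_T]<\infty$ rather than a finite exponential moment of $E_T$, which is the more delicate ingredient in the paper (it devotes a remark to verifying it for specific subordinators).

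Three points in your final step need tightening. First, your bound on the bad event reads $M_{\tau_n}\le\log(1/n)-\log|x_0|+C(T+E_T)$ with $E_T$ a \emph{random} (unbounded) variable, so the claim that this tends to $-\infty$ is not uniform; you should either intersect with $\{E_T\le R\}$ and let $R\to\infty$ afterwards, or take expectations directly, using $\mathbb{E}[E_{\tau_n\wedge T}]\le\mathbb{E}[E_T]<\infty$ to get $\mathbb{E}[\log|X(\tau_n\wedge T)|]\ge\log|x_0|-C(T+\mathbb{E}[E_T])$ and then $P(A_n)\le C'/\log n$. Second, the parenthetical appeal to non-explosion of a \emph{continuous} local martingale does not apply: your $M$ has jumps; the expectation route works because the integrands are bounded by constants depending on $K_\theta$ on $\{1/n<|X|<\theta\}$, so $M_{\cdot\wedge\tau_n}$ is a true $L^2$-martingale given $\mathbb{E}[E_T]<\infty$. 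Third, since $X$ exits $(1/n,\theta)$ by a jump in general, $|X(\tau_n)|$ may overshoot $\theta$ from above, so to run the expectation argument you also need a bound on $\mathbb{E}\big[\log^+|X(\tau_n\wedge T)|\big]$ uniform in $n$ (available from the growth condition in Assumption \ref{linear} via $\log^+|x|\le|x|$ and a second-moment estimate). With these repairs the proof closes.
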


\begin{proof}

We follow the idea in the proof of Lemma 3.4.4 in \cite{sia} and prove this result by contradiction. Suppose that \eqref{preassfor1} is not true, that is, there exists initial condition $x_0\neq 0$ and stopping time $\tau$ with $P(\tau<\infty)>0$ where
\begin{equation}
\tau=\inf\{t\geq t_0: |X(t)|=0\}.
\end{equation}

Since the paths of $X(t)$ are right continuous with left limit (rcll), there exist $T>0$ and $\theta>1$ sufficiently large such that $P(B)>0$, where
\begin{equation}
B=\{w\in \Omega:\tau(w)\leq T\ and\ |X(t)(w)|\leq \theta-1 \ for\  all\ t_0<t<\tau(w)\}.
\end{equation}

Next, define another stopping time
\begin{equation}
\tau_\epsilon=\inf\{t\geq t_0: |X(t)|\leq \epsilon\ or\ |X(t)|\geq \theta \}
\end{equation}
for each $0<\epsilon<|X(t_0)|$.

Let $\lambda=2 K_\theta+\frac{K_\theta^2}{2}$ be a constant and define $Z(t)=e^{-\lambda E_t}|X(t)|^{-1}$. Since $Z(t)=e^{-\lambda E_t}|X(t)|^{-1}$ is $C^{1,1,2}$ except at $X(t)=0$, and  by definition of $\tau_\epsilon$, $X(t)$ will not reach 0 for $t_0\leq t\leq \tau_\epsilon \land T$, so It\^o formula can be applied to $e^{-\lambda (E_{\tau_\epsilon \land T})}|X(\tau_\epsilon \land T)|^{-1}$.

By \eqref{preassfor} and \eqref{preassforr},
\begin{equation}\label{ques}
\begin{aligned}
&e^{-\lambda (E_{\tau_\epsilon \land T})}|X(\tau_\epsilon \land T)|^{-1}-|x_0|^{-1}\\
=&\int_{t_0}^{\tau_\epsilon \land T}e^{-\lambda E_s}[-\frac{X(s-) f(X(s-))}{|X(s-)|^3}]ds+\frac{1}{2}\int_{t_0}^{\tau_\epsilon \land T}e^{-\lambda E_s}\frac{g(X(s-))^2}{|X(s-)|^3}dE_s\\
&+\int_{t_0}^{\tau_\epsilon \land T}e^{-\lambda E_s} \frac{-1}{|X(s-)|^2}\Big[\lambda |X(s-)|dE_s+ k(X(s-))dE_s+g(X(s-))dB_{E_s}\Big]\\
&+\int_{t_0}^{\tau_\epsilon \land T}\int_{|y|<c}e^{-\lambda E_s} \Big[ \frac{1}{|X(s-)+h(X(s-),y)|}-\frac{1}{|X(s-)|}\Big] \tilde{N}(dE_s,dy)\\
&+\int_{t_0}^{\tau_\epsilon \land T}\int_{|y|<c}e^{-\lambda E_s} \Big[ \frac{1}{|X(s-)+h(X(s-),y)|}-\frac{1}{|X(s-)|}+\frac{ X(s-)h(X(s-),y)}{|X(s-)|^3}\Big]\nu(dy)dE_s\\
\leq&\int_{t_0}^{\tau_\epsilon \land T}e^{-\lambda E_s}K_\theta ds+\int_{t_0}^{\tau_\epsilon \land T}e^{-\lambda E_s} \frac{-g(X(s-))X(s-)}{|X(s-)|^3}dB_{E_s}\\
&+\int_{t_0}^{\tau_\epsilon \land T}e^{-\lambda E_s}\Bigg[\frac{-\lambda}{|X(s-)|}+ \frac{-k(X(s-))X(s-)}{|X(s-)|^3}+\frac{g(X(s-))^2}{2|X(s-)|^3}\\
 &+\int_{|y|<c}\Big[ \frac{1}{|X(s-)+h(X(s-),y)|}-\frac{1}{|X(s-)|}+\frac{X(s-)h(X(s-),y)}{|X(s-)|^3} \Big]\nu(dy)\Bigg]dE_s\\
&+\int_{t_0}^{\tau_\epsilon \land T}\int_{|y|<c}e^{-\lambda s} \Big[ \frac{1}{|X(s-)+h(X(s-),y)|}-\frac{1}{|X(s-)|}\Big] \tilde{N}(dE_s,dy)\\
\leq & K_\theta \tau_\epsilon \land T+\int_{t_0}^{\tau_\epsilon \land T}e^{-\lambda E_s}\Big[\frac{-\lambda}{|X(s-)|}+\frac{2K_\theta+\frac{K_\theta^2}{2}}{|X(s-)|}\Big] dE_s
+\int_{t_0}^{\tau_\epsilon \land T}e^{-\lambda E_s} \frac{-g(X(s-))X(s-)}{|X(s-)|^3}dB_{E_s} \\
&+\int_{t_0}^{\tau_\epsilon \land T}\int_{|y|<c}e^{-\lambda E_s} \Big[ \frac{1}{|X(s-)+h(X(s-),y)|}-\frac{1}{|X(s-)|}\Big] \tilde{N}(dE_s,dy)\\
\leq & K_\theta T+\int_{t_0}^{\tau_\epsilon \land T}e^{-\lambda E_s} \frac{-g(X(s-))X(s-)}{|X(s-)|^3}dB_{E_s} \\
&+\int_{t_0}^{\tau_\epsilon \land T}\int_{|y|<c}e^{-\lambda E_s} \Big[ \frac{1}{|X(s-)+h(X(s-),y)|}-\frac{1}{|X(s-)|}\Big] \tilde{N}(dE_s,dy)
\end{aligned}
\end{equation}

The penultimate inequality is derived from lemma 3.4.2 on page 54 of \cite{sia}, which states that
$\frac{1}{|x+y|}-\frac{1}{|x|}+\frac{xy}{|x|^3}\leq \frac{2|y|}{|x|^2}\frac{(|y|+|x|)}{|x+y|}$ for $x, y, x+y\neq 0$, thus
\begin{equation}
\begin{aligned}
&\int_{|y|<c}\Big[ \frac{1}{|X(s-)+h(X(s-),y)|}-\frac{1}{|X(s-)|}+\frac{X(s-)h(X(s-),y)}{|X(s-)|^3} \Big]\nu(dy)\\
\leq& \int_{|y|<c} \frac{2|h(X(s-),y)|}{|X(s-)|^2}\Big[\frac{|h(X(s-),y)|+|X(s-)|}{|h(X(s-),y)+X(s-)|} \Big]\nu(dy)\\
=&\frac{1}{|X(s-)|^2}\int_{|y|<c} \frac{2|h(X(s-),y)|(|h(X(s-),y)|+|X(s-)|)}{|h(X(s-),y)+X(s-)|} \nu(dy)\\
\leq &\frac{K_\theta|X(s-)|}{|X(s-)|^2}=\frac{K_\theta}{|X(s-)|}.
\end{aligned}
\end{equation}

Observe  that the last two terms in the last line of the inequality \eqref{ques} are martingales. Then by taking  expectations of  both sides, we derive that
\begin{equation}
\mathbb{E}\Big[e^{-\lambda (E_{\tau_\epsilon \land T})}|X(t)|^{-1}\Big]\leq |x_0|^{-1}+K_\theta T.
\end{equation}

If $w\in B$, then $\tau_\epsilon(w)\leq T$ and $|X(\tau_\epsilon(w))|\leq \epsilon$, then

\begin{equation}
\mathbb E\Big[e^{-\lambda E_{\tau_\epsilon \land T}}\epsilon^{-1}\mathbbm{1}_B\big]\leq \mathbb E\Big[e^{-\lambda E_{\tau_\epsilon \land T}}|X(\tau_\epsilon(w))|^{-1}\mathbbm{1}_B\big]\leq  \mathbb E\Big[e^{-\lambda E_{\tau_\epsilon \land T}}|X(\tau_\epsilon(w))|^{-1}\big]\leq |x_0|^{-1}+K_\theta T.
\end{equation}

Recall the  reverse H\"older's inequality: for all $p>1$
$$
E(|XY|)\geq (E|X|^{1/p})^{p}(E(|Y|^{-1/(p-1)}))^{-(p-1)}.
$$

We use the reverse  H\"older's inequality with $p=2$, $X= \mathbbm{1}_B$ and $Y=e^{-\lambda E_{\tau_\epsilon \land T}}$.  Since $ X^{1/2}=X$, this gives
$$
[\mathbb P(B)]^2 \Bigg[E( e^{\lambda E_{\tau_\epsilon \land T}})\Bigg]^{-1}\leq \mathbb{E}\Big[e^{-\lambda E_{\tau_\epsilon \land T}}\mathbbm{1}_B\big]\leq \epsilon  (|x_0|^{-1}+K_\theta T),\ for\ all\ \epsilon\geq 0
$$

Since the inverse subordinator has finite exponential moment, $E( e^{(\lambda E_{\tau_\epsilon \land T})})$ is finite for any fixed time $T$, see Lemma 8 in \cite{erko}.
Then, letting  $\epsilon\rightarrow 0$, we obtain  $P(B)=0$, which contradicts the assumption, thus the desired result is correct.
\end{proof}

\begin{rk}
When the Laplace exponent of the subordinator is given by \eqref{laplace-exponent-mixture-log},
an alternative method to show that the expectation $E( e^{(\lambda E_{\tau_\epsilon \land T}}))$ is finite is to use the moments of $E_t$. Since $\{E_t,t\geq 0\}$ is nonnegative and nondecreasing, we have $\tau_\epsilon \land T\leq T$. Because $\lambda>0$, $e^x$ is a strictly positive and increasing function, $E( e^{\lambda E_{\tau_\epsilon \land T}})\leq E( e^{\lambda E_T})$. Thus, it is sufficient to show that $E( e^{\lambda E_T})$ is finite. By Theorem 3.9 in \cite{mmhs}, there exists a function $L\in RV_\infty(0)$ such that for any $n>0$,$\gamma>0$ and sufficiently large $t$,
\begin{equation}
\mathbb{E}[E_t^n]\sim (\log t)^{\gamma n}L(\log t)^{-n}.
\end{equation}
By Taylor expansion and Fubini theorem,
\begin{equation}
\begin{aligned}
\mathbb{E}[\exp(\lambda E_t)]&=\mathbb{E}[\sum_{n=0}^{\infty}\frac{\lambda^n {E_t}^n}{n!}]=\sum_{n=0}^{\infty}\frac{\lambda^n \mathbb{E}[{E_t}^n]}{n!}\sim \sum_{n=0}^{\infty}\frac{\lambda^n (\log t)^{\gamma n}L(\log t)^{-n}}{n!}\\
&=\sum_{n=0}^{\infty}\frac{(\lambda (\log t)^{\gamma}L(\log t)^{-1})^n}{n!}=\exp(\lambda (\log t)^{\gamma}L(\log t)^{-1}).
\end{aligned}
\end{equation}
Hence, for fixed large $t$, $\mathbb{E}[\exp(\lambda E_t)]\sim \exp(\lambda (\log t)^{\gamma}L(\log t)^{-1})$ is finite.

A similar method applies when the Laplace exponent of the subordinator $D(t)$ is given by
\begin{equation}\label{laplace-exponent-mixture-power}
\psi(s)=\sum_{i=1}^k c_i s^{\beta_i},
\end{equation}
where $\sum_{i=1}^k c_i=1$ and $0<\beta_1<\beta_2<...<\beta_k<1$. Then the Laplace transform of the $n$-th moment of $E_t$ is $\mathcal{L}(\mathbb{E}(E_t^n))(s)=\frac{n!}{s(\sum_{i=1}^k c_i s^{\beta_i})^n}$; see  Lemma 8 in \cite{erko}. Using the Karamata Tauberian Theorem (see \cite{taub}, Theorem 1 and Lemma on pp. 443-446) we can deduce that for large $t$,  $\mathbb{E}(E_t^n)\approx C_n t^{n\beta_1}$
\end{rk}

\begin{lem}(Time-Changed Exponential Martingale Inequality)
Let $D(t)$ be a rcll subordinator and its inverse process $E_t:=\inf\{ \tau>0: D(\tau)>t\}$. Let T, $\lambda, \kappa$ be any positive numbers, $B_c=\{ y\in \mathbb{R}: |y|<c\}$. Assume $g:\mathbb{R^+}\rightarrow \mathbb{R}$ and $h:\mathbb{R}^+\times B_c\rightarrow \mathbb{R}$ satisfy $\mathbb{E}[\int_0^T|g(t)|^2dE_t]<\infty$ and $\mathbb{E}[\int_0^T\int_{|y|<c}|h(t,y)|^2\nu(dy)dE_t]<\infty$, then\\
\begin{equation}\label{expmarine}
\begin{aligned}
&P\Big[ \sup_{0\leq t\leq T}\Big\{\int_0^t g(s)dB_{E_s}-\frac{\lambda}{2}\int_0^t|g(s)|^2dE_s+\int_0^t\int_{|y|<c}h(s,y)\tilde{N}(dE_s,dy)\\
&-\frac{1}{\lambda}\int_0^t\int_{|y|<c}\Big[\exp(\lambda h(s,y))-1-\lambda h(s,y)\Big]\nu(dy)dE_s\Big\}>\kappa\Big]\leq\exp(-\lambda\kappa)
\end{aligned}
\end{equation}
\end{lem}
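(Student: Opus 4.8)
The plan is to deduce \eqref{expmarine} from Doob's maximal inequality for a nonnegative supermartingale, namely the exponential of the process inside the probability, mimicking the classical (non-time-changed) exponential martingale inequality but using the time-changed It\^o formula of Lemma~\ref{itofor}. First I would write $U(t)$ for the process appearing inside the braces in \eqref{expmarine}. In the notation of \eqref{sdelevy} with $t_0=0$, $x_0=0$, the process $U$ has vanishing $ds$-drift ($f\equiv 0$), diffusion coefficient $g$, small-jump coefficient $h$, no large jumps ($H\equiv 0$), and $dE_s$-drift $k(s)=-\frac{\lambda}{2}|g(s)|^2-\frac1\lambda\int_{|y|<c}[\exp(\lambda h(s,y))-1-\lambda h(s,y)]\,\nu(dy)$. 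The two moment hypotheses on $g$ and $h$ are exactly what is needed for all these integrals to be well defined and for the $dB_{E_s}$- and $\tilde N(dE_s,dy)$-integrals to be local $\mathcal{G}_t$-martingales, using that $B_{E_t}$ and the time-changed compensated Poisson integral are $\mathcal{G}_t$-(local) martingales in the sense of Kobayashi's construction (cf.\ \cite{keib}).

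Next I would apply Lemma~\ref{itofor} to $U$ with $F(t_1,t_2,x)=\exp(\lambda x)$. Since $F$ depends on $x$ only, $F_{t_1}=F_{t_2}=0$, $F_x=\lambda F$, $F_{xx}=\lambda^2F$, whence $L_1F\equiv 0$ and, after a one-line cancellation, $L_2F\equiv 0$: the term $\frac12 g^2 F_{xx}=\frac{\lambda^2}{2}g^2F$ cancels the contribution $-\frac{\lambda^2}{2}g^2 F$ of $F_x k$, and the compensator integral $\int_{|y|<c}[F(x+h)-F(x)-F_x(x)h]\,\nu(dy)=F(x)\int_{|y|<c}[e^{\lambda h}-1-\lambda h]\,\nu(dy)$ cancels the remaining contribution of $F_x k$. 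Hence $V(t):=\exp(\lambda U(t))$ keeps only the two stochastic-integral terms of \eqref{itolevy}:
\begin{equation*}
V(t)=1+\int_0^t\lambda V(s-)\,g(s)\,dB_{E_s}+\int_0^t\int_{|y|<c}V(s-)\big[e^{\lambda h(s,y)}-1\big]\,\tilde N(dE_s,dy),
\end{equation*}
so $V$ is a nonnegative local $\mathcal{G}_t$-martingale with $V(0)=1$, hence a supermartingale with $\mathbb{E}[V(t)]\le 1$ for all $t$.

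Finally I would run the standard stopping-time argument: with $a:=e^{\lambda\kappa}$ put $\tau_a:=\inf\{t\ge 0:V(t)\ge a\}$, a $\mathcal{G}_t$-stopping time; applying optional sampling to the supermartingale $V$ at the bounded times $0$ and $\tau_a\wedge T$ and using $V\ge 0$ gives $1=\mathbb{E}[V(0)]\ge\mathbb{E}[V(\tau_a\wedge T)]\ge a\,P(\tau_a\le T)$, so $P(\tau_a\le T)\le e^{-\lambda\kappa}$. Since $\sup_{0\le t\le T}U(t)>\kappa$ forces $V(t)>e^{\lambda\kappa}$, and hence $\tau_a\le T$, for some $t\le T$, we conclude $P[\sup_{0\le t\le T}U(t)>\kappa]\le P(\tau_a\le T)\le e^{-\lambda\kappa}$, which is \eqref{expmarine}.

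The step I expect to be the main obstacle is making the passage ``local martingale $\Rightarrow$ supermartingale'' rigorous: one should localize $V$ along $\mathcal{G}_t$-stopping times $\sigma_n\uparrow\infty$ (for instance where $\int_0^t|g(s)|^2\,dE_s$ and the compensated jump integral are bounded by $n$) so that each $V^{\sigma_n}$ is a true martingale, apply the maximal inequality to $V^{\sigma_n}$, and let $n\to\infty$ via Fatou's lemma, where finiteness of exponential moments of $E_t$ (recalled after \eqref{expofet}) guarantees that no mass escapes. A secondary point needing a line is the integrability of $\int_0^T\int_{|y|<c}[e^{\lambda h(s,y)}-1-\lambda h(s,y)]\,\nu(dy)\,dE_s$: one uses the $|y|<c$ truncation and the elementary inequality $0\le e^u-1-u=O(u^2)$ for bounded $u$ together with $\mathbb{E}[\int_0^T\int_{|y|<c}|h(s,y)|^2\,\nu(dy)\,dE_s]<\infty$ (and, should $h$ fail to be bounded, the remark that on the event where this integral is infinite the expression inside the braces in \eqref{expmarine} is $-\infty$, so the bound holds vacuously there).
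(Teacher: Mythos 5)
Your proposal is correct and follows essentially the same route as the paper: apply the time-changed It\^o formula to the exponential of the process inside the braces, observe that the $dE_s$-drift cancels so that $\exp(\lambda U(t))$ is a nonnegative local martingale equal to $1$ at $t=0$, and conclude via Doob's maximal inequality. The only (immaterial) difference is in how the localization is organized — the paper stops the integrands at explicit times $\tau_n$, proves each stopped exponential is a bounded true martingale, and passes to the limit in the resulting events, whereas you invoke the standard fact that a nonnegative local martingale is a supermartingale via Fatou (which, as you suspect, needs no exponential moments of $E_t$ at all).
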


\begin{proof}
Define a sequence of stopping times $(\tau_n,n\geq 1)$ as below
\begin{equation}
\begin{aligned}
\tau_n=&\inf\Big\{t\geq 0: \bigg|\int_0^t g(s)dB_{E_s}\bigg|+\frac{\lambda}{2}\int_0^t|g(s)|^2dE_s+\bigg|\int_0^t\int_{|y|<c}h(s,y)\tilde{N}(dE_s,dy)\bigg|\\
&+\frac{1}{\lambda}\bigg|\int_0^t\int_{|y|<c}\Big[\exp(\lambda h(s,y))-1-\lambda h(s,y)\Big]\nu(dy)dE_s\bigg|\geq n\Big\}, \ for\ n\geq 1.
\end{aligned}
\end{equation}

Note that $\tau_n\rightarrow \infty$ as $n\rightarrow \infty$ a.s.

Define  the following It\^o process
\begin{equation}
\begin{aligned}
X_n(t)=&\lambda\int_0^t g(s)\mathbbm{1}_{[0,\tau_n]}(s) dB_{E_s}-\frac{\lambda^2}{2}\int_0^t|g(s)|^2\mathbbm{1}_{[0,\tau_n]}(s)dE_s\\
&+\lambda\int_0^t\int_{|y|<c}h(s,y)\mathbbm{1}_{[0,\tau_n]}(s)\tilde{N}(dE_s,dy)\\
&-\int_0^t\int_{|y|<c}\Big[\exp(\lambda h(s,y))-1-\lambda h(s,y)\Big]\mathbbm{1}_{[0,\tau_n]}(s)\nu(dy)dE_s,
\end{aligned}
\end{equation}
with $X_n(0)=0$ for all $n\geq 0$. Then for all $0\leq t\leq T$
\begin{equation}
\begin{aligned}
|X_n&(t)|\leq\lambda\Big|\int_0^t g(s)\mathbbm{1}_{[0,\tau_n]}(s) dB_{E_s}\Big|+\Big|\lambda\int_0^t\int_{|y|<c}h(s,y)\mathbbm{1}_{[0,\tau_n]}(s)\tilde{N}(dE_s,dy)\Big|\\
&+\frac{\lambda^2}{2}\int_0^t|g(s)|^2\mathbbm{1}_{[0,\tau_n]}(s)dE_s+\Big|\int_0^t\int_{|y|<c}\Big[\exp(\lambda h(s,y))-1-\lambda h(s,y)\Big]\mathbbm{1}_{[0,\tau_n]}(s)\nu(dy)dE_s\Big|\\
&\leq\lambda n.
\end{aligned}
\end{equation}

Let $Z(t)=\exp(X_n(t))$, by the time-changed It\^o's formula \eqref{itolevy},
\begin{equation}
\begin{aligned}
\exp&(X_n(t))-\exp(x_0)\\
=&\int_0^t \exp(X_n(s))\Big[-\frac{\lambda^2}{2}|g(s)|^2\mathbbm{1}_{[0,\tau_n]}(s) - \int_{|y|<c}\big[\exp(\lambda h(s,y))-1-\lambda h(s,y)\big]\mathbbm{1}_{[0,\tau_n]}(s)\nu(dy) \\
&+ \int_{|y|<c}\big[\exp(\lambda h(s,y))-1-\lambda h(s,y)\big]\mathbbm{1}_{[0,\tau_n]}(s)\nu(dy)+ \frac{\lambda^2}{2}|g(s)|^2\mathbbm{1}_{[0,\tau_n]}(s)\Big] dE_s\\
&+\int_0^t\int_{|y|<c}\big[\exp(X_n(s)+\lambda h(s,y))-\exp(X_n(s))\big]\mathbbm{1}_{[0,\tau_n]}(s)\tilde{N}(dE_s,dy)\\
&+\lambda\int_0^t\exp(X_n(s))g(s)\mathbbm{1}_{[0,\tau_n]}(s)dB_{E_s}\\
=&\int_0^t\int_{|y|<c}\big[\exp(X_n(s)+\lambda h(s,y))-\exp(X_n(s))\big]\mathbbm{1}_{[0,\tau_n]}(s)\tilde{N}(dE_s,dy)\\
&+\lambda\int_0^t\exp(X_n(s))g(s)\mathbbm{1}_{[0,\tau_n]}(s)dB_{E_s},\\
\end{aligned}
\end{equation}

thus $\{\exp(X_n(t)),0\leq t\leq T\}$ is a local martingale. Since we have
\begin{equation}
\sup_{t\in[0,T]}\exp(X_n(t))\leq \exp(\lambda n)\ \ a.s.
\end{equation}
there exists a sequence of stopping times $(T_m,m\in\mathbbm{N})$ with $(T_m\rightarrow \infty) (a.s.)$ as $n\rightarrow \infty$ such that for all $0\leq s\leq t\leq T$
\begin{equation}
\mathbbm{E}[\exp(X_n(t\wedge T_m))|\mathcal{F}_s]=\exp(X_n(s\wedge T_m))\leq \exp(\lambda n)\ \ a.s.
\end{equation}

By Dominated Convergence Theorem, we have
\begin{equation}
\mathbb {E}[\exp(X_n(t))|\mathcal{F}_s]=\lim_{m\rightarrow \infty}\mathbbm{E}[\exp(X_n(t\wedge T_m))|\mathcal{F}_s]=\lim_{m\rightarrow \infty}\exp(X_n(s\wedge T_m))=\exp(X_n(s)),
\end{equation}
that is, $Z(t)=\exp(X_n(t))$ is a martingale for all $0\leq t \leq T$ with $\mathbbm{E}[\exp(X_n(t))]=1$.

Apply Doob's martingale inequality
\begin{equation}
\mathbb P\Big[ \sup_{0\leq t\leq T}\exp(X_n(t))\geq \exp(\lambda \kappa) \Big]\leq \exp(-\lambda \kappa)\mathbbm{E}[\exp(X_n(T))]=\exp(-\lambda\kappa),
\end{equation}
equivalently,

\begin{equation}
\mathbb P\Big[ \sup_{0\leq t\leq T}\frac{X_n(t)}{\lambda}\geq \kappa \Big]\leq\exp(-\lambda\kappa),
\end{equation}
writing $\exp(X_n(t))$ explicitly, we have

\begin{equation}
\begin{aligned}
\mathbb P\Big[ \sup_{0\leq t\leq T} \Big\{&\int_0^t g(s)\mathbbm{1}_{[0,\tau_n]}(s) dB_{E_s}-\frac{\lambda}{2}\int_0^t|g(s)|^2\mathbbm{1}_{[0,\tau_n]}(s)dE_s\\
&+\int_0^t\int_{|y|<c}h(s,y)\mathbbm{1}_{[0,\tau_n]}(s)\tilde{N}(dE_s,dy)\\
&-\frac{1}{\lambda}\int_0^t\int_{|y|<c}\Big[\exp(\lambda h(s,y))-1-\lambda h(s,y)\Big]\mathbbm{1}_{[0,\tau_n]}(s)\nu(dy)dE_s\Big\}\geq \kappa \Big] \leq\exp(-\lambda\kappa)
\end{aligned}
\end{equation}

Define
\begin{equation}
\begin{aligned}
A_n=\Big\{w\in\Omega: \sup_{0\leq t\leq T} \Big\{&\int_0^t g(s)\mathbbm{1}_{[0,\tau_n]}(s) dB_{E_s}-\frac{\lambda}{2}\int_0^t|g(s)|^2\mathbbm{1}_{[0,\tau_n]}(s)dE_s\\
&+\int_0^t\int_{|y|<c}h(s,y)\mathbbm{1}_{[0,\tau_n]}(s)\tilde{N}(dE_s,dy)\\
&-\frac{1}{\lambda}\int_0^t\int_{|y|<c}\Big[\exp(\lambda h(s,y))-1-\lambda h(s,y)\Big]\mathbbm{1}_{[0,\tau_n]}(s)\nu(dy)dE_s\Big\}\geq \kappa \Big\},
\end{aligned}
\end{equation}
then $\mathbb P(A_n)\leq \exp(-\lambda\kappa)$.

Since
\begin{equation}
\mathbb P[\liminf_{n\rightarrow\infty}A_n]\leq \liminf_{n\rightarrow\infty}\mathbb P(A_n)\leq \limsup_{n\rightarrow\infty}\mathbb P(A_n)\leq \mathbb P[\limsup_{n\rightarrow\infty}A_n]
\end{equation}
and
\begin{equation}
\limsup_{n\rightarrow\infty}\mathbb P(A_n)\leq \exp(-\lambda\kappa),
\end{equation}
also \begin{equation}
\limsup_{n\rightarrow\infty}A_n= \liminf_{n\rightarrow\infty}A_n= A,
\end{equation}
where
\begin{equation}
\begin{aligned}
A=\Big\{w\in\Omega: \sup_{0\leq t\leq T} \Big\{&\int_0^t g(s) dB_{E_s}-\frac{\lambda}{2}\int_0^t|g(s)|^2dE_s+\int_0^t\int_{|y|<c}h(s,y)\tilde{N}(dE_s,dy)\\
&-\frac{1}{\lambda}\int_0^t\int_{|y|<c}\Big[\exp(\lambda h(s,y))-1-\lambda h(s,y)\Big]\nu(dy)dE_s\Big\}\geq \kappa \Big\},
\end{aligned}
\end{equation}

thus
\begin{equation}
\mathbb P(A)=\mathbb P[\liminf_{n\rightarrow\infty}A_n]\leq \limsup_{n\rightarrow\infty}P(A_n) \leq \limsup_{n\rightarrow\infty}\exp(-\lambda\kappa)=\exp(-\lambda\kappa).
\end{equation}

\end{proof}
The next result can be considered as a strong law of large numbers for the inverse subordinator.
\begin{lem}\label{lemma-SLLN}
Let $\{E_t\}_{t\geq 0}$ be the inverse of the mixed stable subordinator  $D(t)$ with laplace exponent given in \eqref{laplace-exponent-mixture-log} as defined in \eqref{inverse-E-process}, then
\begin{equation}
\lim_{t\rightarrow\infty}\frac{E_t}{t}=0,\ a.s.
\end{equation}
\end{lem}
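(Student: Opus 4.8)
The plan is to establish the almost-sure statement through a Borel--Cantelli argument along the dyadic times $t_k=2^k$, using only the first-moment asymptotics \eqref{expofet}, and then to promote this subsequential bound to the full limit via the monotonicity of $t\mapsto E_t$. From the definition \eqref{inverse-E-process}, $t\mapsto E_t$ is nondecreasing, finite, and satisfies $E_t\to\infty$ a.s. as $t\to\infty$; in particular $E_t\geq 0$, so it suffices to show $\limsup_{t\to\infty}E_t/t\leq 0$ a.s.

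First I would fix $\epsilon>0$. By \eqref{expofet} there exist a slowly varying function $L\in RV_\infty(0)$, the constant $\gamma>0$ appearing in the description of the mixed stable subordinator, and $T_0>0$ such that $\mathbb{E}[E_t]\leq 2(\log t)^\gamma L(\log t)^{-1}$ for $t\geq T_0$. Markov's inequality at $t=2^k$ gives, for $k$ large,
\[
\mathbb{P}\!\left(\frac{E_{2^k}}{2^k}>\epsilon\right)\leq\frac{\mathbb{E}[E_{2^k}]}{\epsilon\,2^k}\leq\frac{2}{\epsilon}\,\frac{(k\log 2)^\gamma L(k\log 2)^{-1}}{2^k}.
\]
Since a slowly varying function is dominated by every positive power of its argument, the numerator $(k\log 2)^\gamma L(k\log 2)^{-1}$ is $o(2^{k/2})$, so the last expression is summable in $k$. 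The Borel--Cantelli lemma then yields an a.s. finite random index $k_0(\omega)$ with $E_{2^k}(\omega)\leq\epsilon\,2^k$ for all $k\geq k_0(\omega)$.

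To upgrade to all $t$: for $t\in[2^k,2^{k+1})$ with $k\geq k_0(\omega)$, monotonicity of $s\mapsto E_s$ gives $E_t\leq E_{2^{k+1}}\leq\epsilon\,2^{k+1}\leq 2\epsilon\,t$, so $\limsup_{t\to\infty}E_t/t\leq 2\epsilon$ on an event of full probability. Intersecting these events over $\epsilon\in\{1/m:m\in\mathbb{N}\}$ and recalling $E_t\geq 0$ gives $\lim_{t\to\infty}E_t/t=0$ a.s.

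I expect the only mildly delicate points to be the summability estimate, which rests on the robust fact that slowly varying functions are dominated by powers, and the harmless transfer from the dyadic subsequence to the full limit; neither is a genuine obstacle. A self-contained alternative avoids \eqref{expofet} altogether: one checks by Fatou's lemma that $\mathbb{E}[D(1)]=\psi'(0^+)=\lim_{s\downarrow 0}\int_0^1\Gamma(1-\beta)\beta s^{\beta-1}p(\beta)\,d\beta=\infty$, invokes the strong law of large numbers for subordinators to obtain $D(s)/s\to\infty$ a.s., deduces $D(s-)/s\to\infty$ by comparing $D(s-)$ with $D(s/2)$, and concludes from the sandwich $D(E_t-)\leq t\leq D(E_t)$ together with $E_t\to\infty$ that $t/E_t\geq D(E_t-)/E_t\to\infty$, i.e.\ $E_t/t\to 0$ a.s.
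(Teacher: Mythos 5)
Your main argument is essentially the paper's own proof: Markov's inequality applied to $E_{2^k}/2^k$ via the first-moment asymptotics \eqref{expofet}, Borel--Cantelli along the dyadic times, and monotonicity of $t\mapsto E_t$ to pass from the subsequence to all $t$ (the paper folds the monotonicity step into the event $A_n$ by bounding $\sup_{2^n<t<2^{n+1}}E_t/t\leq E_{2^{n+1}}/2^n$, and your explicit appeal to the fact that slowly varying functions are dominated by powers is, if anything, slightly more careful than the paper's ratio-test step). Your sketched alternative --- showing $\mathbb{E}[D(1)]=\psi'(0^+)=\infty$, invoking the strong law $D(s)/s\to\infty$ for the subordinator, and sandwiching $D(E_t-)\leq t\leq D(E_t)$ --- is a genuinely different and more robust route that avoids \eqref{expofet} entirely and would extend beyond the mixed stable case, which is precisely the generalization the authors say they are missing in a later remark.
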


\begin{proof}
Fix $\epsilon>0$ and define
\begin{equation}
A_n=\Big\{\sup_{2^n<t<2^{n+1}}\Big|\frac{E_t}{t}\Big|>\epsilon\Big\},
\end{equation}
then, by Markov's inequality and equation \eqref{expofet}, as $n\rightarrow \infty$, for some $\gamma>0$,
\begin{equation}
\begin{aligned}
\epsilon\mathbb P(A_n)&\leq \mathbb{E}\Big[\sup_{2^n<t<2^{n+1}}\Big|\frac{E_t}{t}\Big|\Big]\leq \mathbb{E}\Big[\Big|\frac{E_{2^{n+1}}}{2^n}\Big|\Big]\sim\frac{[\log(2^{n+1})]^{\gamma}L(\log (2^{n+1}))^{-1}}{2^n}\\
&=\frac{(n+1)^{\gamma}(\log 2)^\gamma L(\log (2^{n+1}))^{-1}}{2^n}\sim \frac{C (n+1)^{\gamma}}{2^n}.
\end{aligned}
\end{equation}
By the ratio test, $\sum_{n=1}^{\infty} \mathbb P(A_n)<\infty$. Applying Borel-Cantelli lemma, we have
\begin{equation}
\lim_{t\rightarrow \infty}\frac{E_t}{t}=0,\ a.s.
\end{equation}

\end{proof}

\begin{rk}
Lemma \ref{lemma-SLLN} can also be proved for discrete case with the help of Laplace transform. Let $E_t$ be an inverse of the subordinator with Laplace exponent $\psi(s)=\sum_{i=1}^k c_i s^{\beta_i}$, where $\sum_{i=1}^k c_i=1$ and $0<\beta_1<\beta_2<...<\beta_k<1$. Then the Laplace transform of the $nth$ moment of $E_t$ is $\mathcal{L}(\mathbb{E}(E_t^n))(s)=\frac{n!}{s(\sum_{i=1}^k c_i s^{\beta_i})^n}$.

By a Karamata Tauberian theorem (see \cite{taub}, Theorem 1 and Lemma on pp. 443-446), since $\mathcal{L}(\mathbb{E}(E_t))(s)\sim cs^{-(1+\beta_1)}$ as $s\to 0$
then  $\mathbb{E}(E_t)\sim C t^{\beta_1}$ as $t\rightarrow \infty$. Utilizing this result, $\epsilon\mathbb P(A_n)\leq \mathbb{E}\Big[\Big|\frac{E_{2^{n+1}}}{2^n}\Big|\Big]\sim\frac{(2^{n+1})^{\beta_1}}{2^n}=2^{\beta_1}2^{-(1-\beta_1)n}$, thus $\sum_{n=1}^{\infty} \mathbb P(A_n)<\infty$. Applying Borel-Cantelli lemma, we have
$ \lim_{t\rightarrow \infty}\frac{E_t}{t}=0,\ a.s.$

\end{rk}

\begin{rk}

We believe that Lemma \ref{lemma-SLLN} should hold for the inverse of any strictly increasing subordinator. But we could not prove this in this paper. We are missing  the moment asymptotics for  the inverse of any strictly increasing subordinator. We will work on this result in a future project.
\end{rk}

\section{Main Results}

In this section, we will analyze conditions for almost sure exponential path stability and almost sure path stability for the  SDEs in equations \eqref{SDE} and \eqref{aimfinal}, followed by some examples.

\subsection{Stochastic Differential Equations driven by Time-Changed L\'evy Noise with Small Jumps}


\begin{tm}\label{1stthm}
Suppose that Assumption \ref{preass1} holds. Let $V\in C^2(\mathbbm{R};\mathbbm{R}^+)$ and let $p>0,c_1>0,c_2\in\mathbbm{R}, c_3\in\mathbbm{R}, c_4\geq 0, c_5>0$ such that for all $x_0\neq 0$ and $t_1,t_2\in \mathbbm{R}^+$,
\begin{equation}
\begin{split}
&(i) c_1|x|^p\leq V(x),\ \ (ii)L_1V(x)\leq c_2V(x),\ \ (iii)L_2V(x)\leq c_3V(x),\\
&(iv) |(\partial_x V(x))g(t_1,t_2,x)|^2\geq c_4(V(x))^2,\\
&(v) \int_{|y|<c}\Big[ \log\Big(\frac{V(x+h(t_1,t_2, x,y))}{V(x)}\Big)-\frac{V(x+h(t_1,t_2, x,y))-V(x)}{V(x)} \Big]\nu(dy)\leq-c_5.
\end{split}
\end{equation}
Then when $f\neq 0$ and $\lim _{t\to\infty}\frac{E_t}{t}=0$ a.s.,
\begin{equation}
\limsup_{t\rightarrow \infty} \frac{1}{t}\log|X(t)|\leq \frac{c_2}{p}\ \ \ a.s.
\end{equation}
and if $c_2<0$, the trivial solution of \eqref{SDE} is almost surely exponentially path stable; when $f=0$ (i.e. no  time drift in the SDE),
\begin{equation}
\limsup_{t\rightarrow \infty}\frac{1}{E_t}\log|X(t))|\leq \frac{1}{2p}\Big(c_3-\frac{1}{2}c_4-c_5\Big)\  \ \ a.s.,
\end{equation}

and if $c_3<\frac{1}{2}c_4+c_5$, the trivial solution of \eqref{SDE} is almost surely path stable.
\end{tm}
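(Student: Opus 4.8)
\textit{Sketch of the intended argument.} The strategy is to run the Lyapunov--It\^o scheme of Mao and Siakalli in the time-changed setting: apply the time-changed It\^o formula of Lemma~\ref{itofor} to $\log V(X(t))$, bound the resulting $dt$- and $dE_t$-drifts by hypotheses (ii)--(v), control the stochastic integrals with the time-changed exponential martingale inequality \eqref{expmarine} together with the Borel--Cantelli lemma, and finally divide by the appropriate ``clock'' ($t$ when $f\neq 0$, $E_t$ when $f=0$), exploiting the two-speed asymptotics $E_t/t\to 0$ versus $E_t\to\infty$.

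First I would take $x_0\neq 0$; by the preceding lemma on non-attainment of the origin (valid under Assumption~\ref{preass1}) we have $X(t)\neq 0$ for all $t\ge t_0$ a.s., so that $\log V(X(t))$ is well defined since $V>0$. Applying the time-changed It\^o formula to $F(t_1,t_2,x)=\log V(x)$, which does not depend on $t_1,t_2$, and reading off $L_1F$, $L_2F$ from \eqref{linearop}, a direct computation gives (suppressing the arguments $t_1,t_2$ in $f,g,h$)
\begin{gather*}
L_1F(x)=\frac{V'(x)f(x)}{V(x)}=\frac{L_1V(x)}{V(x)},\\
L_2F(x)=\frac{L_2V(x)}{V(x)}-\frac12\,\frac{g^2(V'(x))^2}{V(x)^2}+\int_{|y|<c}\Big[\log\tfrac{V(x+h)}{V(x)}-\tfrac{V(x+h)-V(x)}{V(x)}\Big]\nu(dy),
\end{gather*}
while the stochastic part of \eqref{itolevy} reduces to the two local martingales $M_1(t)=\int_{t_0}^t\frac{V'(X(s-))g}{V(X(s-))}\,dB_{E_s}$ and $M_2(t)=\int_{t_0}^t\int_{|y|<c}\log\frac{V(X(s-)+h)}{V(X(s-))}\,\tilde N(dE_s,dy)$. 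Hypotheses (ii)--(v) then yield the pointwise bounds $L_1F(x)\le c_2$ and $L_2F(x)\le c_3-\tfrac12c_4-c_5$ for $x\neq 0$, whence
\begin{equation*}
\log V(X(t))\le\log V(x_0)+c_2(t-t_0)+\big(c_3-\tfrac12c_4-c_5\big)(E_t-E_{t_0})+M_1(t)+M_2(t).
\end{equation*}

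The core step is to control $M_1(t)+M_2(t)$ relative to the right clock. I would apply \eqref{expmarine} on each window $[t_0,n]$, $n\in\mathbb N$, with integrands $\frac{V'(X(s-))g}{V(X(s-))}$ and $\log\frac{V(X(s-)+h)}{V(X(s-))}$, a parameter $\lambda>0$, and $\kappa=\kappa_n$ of order $\log n$ chosen so that $\sum_n e^{-\lambda\kappa_n}<\infty$; Borel--Cantelli then yields, a.s.\ and for all large $n$ and all $t\in[t_0,n]$, an estimate of the form $M_1(t)+M_2(t)\le\kappa_n+\tfrac{\lambda}{2}\int_{t_0}^t(\tfrac{V'g}{V})^2dE_s+\tfrac1\lambda\int_{t_0}^t\int_{|y|<c}\big[(\tfrac{V(X(s-)+h)}{V(X(s-))})^{\lambda}-1-\lambda\log\tfrac{V(X(s-)+h)}{V(X(s-))}\big]\nu(dy)\,dE_s$. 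The quadratic-variation and jump-compensator corrections on the right are then played off against the negative parts $-\tfrac12c_4$ and $-c_5$ of $L_2F$ via (iv) and (v); only a fraction of this negative drift survives the comparison, and this is precisely what leaves the coefficient $\tfrac12$ in front of $c_3-\tfrac12c_4-c_5$ in the conclusion. I expect this martingale bookkeeping---carried out with integrators driven by the random time change $E_t$---to be the main obstacle.

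It remains to divide and pass to the limit. When $f\neq 0$: dividing the displayed inequality by $t$, the terms $t^{-1}\log V(x_0)$, $\kappa_n/t$, and $t^{-1}$ times everything carried by $dE_s$ (the $(c_3-\tfrac12c_4-c_5)(E_t-E_{t_0})$ term and the compensator corrections) all tend to $0$ a.s., using $\kappa_n\sim\log n$ and the strong law $E_t/t\to 0$ (the hypothesis, or Lemma~\ref{lemma-SLLN}); hence $\limsup_{t\to\infty}t^{-1}\log V(X(t))\le c_2$, and (i) upgrades this to $\limsup_{t\to\infty}t^{-1}\log|X(t)|\le c_2/p$, i.e.\ a.s.\ exponential path stability when $c_2<0$. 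When $f=0$: dividing instead by $E_t$, which increases to $\infty$ a.s., the initial term vanishes and $\kappa_n/E_n\to 0$ by the moment asymptotics \eqref{expofet} (first along integers, then for all $t$ by monotonicity of $t\mapsto E_t$), leaving $\limsup_{t\to\infty}E_t^{-1}\log V(X(t))\le\tfrac12(c_3-\tfrac12c_4-c_5)$; by (i) again $\limsup_{t\to\infty}E_t^{-1}\log|X(t)|\le\tfrac1{2p}(c_3-\tfrac12c_4-c_5)$, and with $\nu(t)=E_t$ this is a.s.\ path stability when $c_3<\tfrac12c_4+c_5$.
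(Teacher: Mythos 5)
Your overall architecture is exactly the paper's: apply the time-changed It\^o formula to $\log V(X(t))$, bound the $ds$- and $dE_s$-drifts by (ii)--(v), control the local martingales $M_1+M_2$ with the time-changed exponential martingale inequality plus Borel--Cantelli, and then divide by $t$ (using $E_t/t\to 0$) or by $E_t$. The decomposition you write down, including the separation of the $-\tfrac12(V'g/V)^2$ term and the jump term $I_2$ with integrand $\log\frac{V(x+h)}{V(x)}-\frac{V(x+h)-V(x)}{V(x)}$, matches the paper's.

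The one place your bookkeeping goes wrong is the ``core step.'' You keep $\lambda$ fixed and take $\kappa_n\sim\log n$, and you assert that the surviving \emph{fraction} of the negative drift ``is precisely what leaves the coefficient $\tfrac12$'' in the conclusion. That accounting does not work. With $r=V(X(s-)+h)/V(X(s-))$, the compensator correction from the martingale inequality combines with $I_2$ to give the integrand $\tfrac1\lambda(r^\lambda-1)-(r-1)$, which for fixed $\lambda\in(0,1)$ is only $\le 0$; it does not retain any definite multiple of the hypothesis-(v) quantity $\log r-(r-1)$, so the $-c_5$ contribution is lost, and the Brownian correction leaves $-\tfrac{1-\lambda}{2}c_4$ rather than $-\tfrac12 c_4$. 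The paper avoids this by taking $\lambda=\epsilon$, $\kappa=\epsilon n$ (so the Borel--Cantelli bound is $e^{-\epsilon^2 n}$), and then letting $\epsilon\to 0$ at the end: both corrections vanish in that limit, the \emph{full} drift $c_3-\tfrac12c_4-c_5$ survives, and the extra $\tfrac12$ in $\tfrac1{2p}(c_3-\tfrac12c_4-c_5)$ is introduced only at the very last step (it is harmless there because the quantity is nonpositive in the case of interest $c_3<\tfrac12c_4+c_5$). So your proof is repairable -- either adopt the paper's $\epsilon\to0$ scheme, or keep $\kappa_n\sim\log n$ but run the argument for a sequence $\lambda_k\downarrow 0$ and pass to the limit in $k$ after dividing by the clock -- but as written the fixed-$\lambda$ comparison would not yield the stated constants.
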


\begin{proof}
Define $Z(t)=\log|V(X(t))|$ and apply time-changed It\^o formula \eqref{itolevy} to it, then for all $t\geq {t_0}$,
\begin{equation}
\begin{aligned}
&\log|V(X(t))|\\
=&\log|V(x_0)|+\int_{t_0}^t \frac{\partial_x V(X(s-))}{V(X(s-))}f(s,E_s,X(s-))ds+\int_{t_0}^t\frac{\partial_x V(X(s-))}{V(X(s-))}k(s,E_s,X(s-))\\
&+\frac{1}{2}\frac{\partial^2_x V(X(s-))g^2(s,E_s,X(s-))}{V(X(s-))} -\frac{1}{2}\frac{(\partial_x V(X(s-))g(s,E_s,X(s-)))^2}{V(X(s-))^2}\\
&+\int_{|y|<c}\Big[ \log(V(X(s-)+h(s,E_s,X(s-),y)))-\log(V(X(s-))\\
&\hspace{7cm}- \frac{\partial_x V(X(s-))}{V(X(s-))} h(s,E_s,X(s-),y) \Big]\nu(dy)dE_s\\
&+\int_{t_0}^t\int_{|y|<c}\Big[ \log(V(X(s-)+h(s,E_s,X(s-),y)))-\log(V(X(s-))\Big]\tilde{N}(dE_s,dy)\\
&+\int_{t_0}^t\frac{\partial_x V(X(s-))}{V(X(s-))}g(s,E_s,X(s-))dB_{E_s}\\
\end{aligned}
\end{equation}
\begin{equation}
\begin{aligned}
=&\log|V(x_0)|+\int_{t_0}^t \frac{\partial_x V(X(s-))f(s,E_s,X(s-))}{V(X(s-))} ds\\
&+\int_{t_0}^t \frac{\partial_x V(X(s-))k(s,E_s,X(s-))}{V(X(s-))}+\frac{\partial^2_x V(X(s-)g^2(s,E_s,X(s-)))}{2V(X(s-))}\\
&+\int_{|y|<c}\Big[\frac{V(X(s-)+h(s,E_s,X(s-),y))}{V(X(s-))}-1-\frac{\partial_x V(X(s-))}{V(X(s-))} h(s,E_s,X(s-),y)\Big]\nu(dy) dE_s\\
&+\int_{t_0}^t\int_{|y|<c}\Big[ \log(V(X(s-)+h(s,E_s,X(s-),y)))-\log(V(X(s-))\\
&\hspace{7cm}- \frac{\partial_x V(X(s-))}{V(X(s-))} h(s,E_s,X(s-),y) \Big]\nu(dy)dE_s\\
&-\int_{t_0}^t\int_{|y|<c}\Big[\frac{V(X(s-)+h(s,E_s,X(s-),y))}{V(X(s-))}-1-\frac{\partial_x V(X(s-))}{V(X(s-))} h(s,E_s,X(s-),y)\Big]\nu(dy) dE_s\\
&-\int_{t_0}^t\frac{1}{2}\frac{(\partial_x V(X(s-))g(s,E_s,X(s-)))^2}{V(X(s-))^2}dE_s\\
&+\int_{t_0}^t\int_{|y|<c}\Big[ \log(V(X(s-)+h(s,E_s,X(s-),y)))-\log(V(X(s-))\Big]\tilde{N}(dE_s,dy)\\
&+\int_{t_0}^t\frac{\partial_x V(X(s-))}{V(X(s-))}g(s,E_s,X(s-))dB_{E_s}\\
=&\log|V(x_0)|+\int_{t_0}^t\frac{L_1V(X(s-))}{V(X(s-))}ds+\int_{t_0}^t\frac{L_2V(X(s-))}{V(X(s-))}dE_s\\
&+\int_{t_0}^t\frac{\partial_x V(X(s-))}{V(X(s-))}g(s,E_s,X(s-))dB_{E_s}-\frac{1}{2}\int_{t_0}^t\frac{(\partial_x V(X(s-))g(s,E_s,X(s-)))^2}{V(X(s-))^2}dE_s\\
&+\int_{t_0}^t\int_{|y|<c}\Big[ \log\Big(\frac{V(X(s-)+h(s,E_s,X(s-),y))}{V(X(s-))}\Big)\Big]\tilde{N}(dE_s,dy)+I_2(t),\\
\end{aligned}
\end{equation}
where
\begin{equation}
\begin{aligned}
&I_2(t)=\int_{t_0}^t\int_{|y|<c}\Big[ \log\Big(\frac{V(X(s-)+h(s,E_s,X(s-),y))}{V(X(s-))}\Big)\\
&\hspace{6cm}-\frac{V(X(s-)+h(s,E_s,X(s-),y))-V(X(s-))}{V(X(s-))} \Big]\nu(dy)dE_s.
\end{aligned}
\end{equation}

Define
\begin{equation}
\begin{aligned}
&M(t)=\int_{t_0}^t\frac{\partial_x V(X(s-))}{V(X(s-))}g(s,E_s,X(s-))dB_{E_s}\\
&\hspace{5cm}+\int_{t_0}^t\int_{|y|<c}\Big[ \log\Big(\frac{V(X(s-)+h(s,E_s,X(s-),y))}{V(X(s-))}\Big)\Big]\tilde{N}(dE_s,dy),
\end{aligned}
\end{equation}
then, applying conditions (ii) and (iii),
\begin{equation}
\begin{aligned}
&\log|V(X(t))|\leq \log|V(x_0)|+c_2(t-t_0)+c_3(E_t-E_{t_0})+M(t)+I_2(t)\\
&\hspace{7cm}-\frac{1}{2}\int_{t_0}^t\frac{(\partial V(X(s-))g(s,E_s,X(s-)))^2}{V(X(s-))^2}dE_s.
\end{aligned}
\end{equation}

By exponential martingale inequality \eqref{expmarine}, for $T=n,\lambda=\epsilon, \kappa=\epsilon n$ where $\epsilon\in(0,1)$ and $n\in\mathbbm{N}$. Then for every integer $n\geq {t_0}$, we find that
\begin{equation}
\begin{aligned}
P\Big[&\sup_{{t_0}\leq t\leq n}\Big\{M(t)-\frac{\epsilon}{2}\int_{t_0}^t\frac{(\partial V(X(s-))g(s,E_s,X(s-)))^2}{V(X(s-))^2}dE_s\\
&-\frac{1}{\epsilon}\int_{t_0}^t\int_{|y|<c}\Big[\exp\Big( \log\Big(\frac{V(X(s-)+h(s,E_s,X(s-),y))}{V(X(s-))}\Big)^\epsilon\Big)-1\\
&-\epsilon \log\Big(\frac{V(X(s-)+h(s,E_s,X(s-),y))}{V(X(s-))}\Big)\Big]\nu(dy)dE_s\Big\}>\epsilon n\Big]\leq \exp(-\epsilon^2 n)
\end{aligned}
\end{equation}

Since $\sum_{n=1}^{\infty}\exp(-\epsilon^2 n)<\infty$, by Borel-Cantelli lemma , we have
\begin{equation}
\begin{aligned}
P\Big[&\limsup_{n\rightarrow \infty}\frac1n\Big[ \sup_{{t_0}\leq t\leq n}\Big\{M(t)-\frac{\epsilon}{2}\int_{t_0}^t\frac{(\partial V(X(s-))g(s,E_s,X(s-)))^2}{V(X(s-))^2}dE_s\\
&-\frac{1}{\epsilon}\int_{t_0}^t\int_{|y|<c}\Big[\exp\Big( \log\Big(\frac{V(X(s-)+h(s,E_s,X(s-),y))}{V(X(s-))}\Big)^{ \epsilon}\Big)-1\\
&-\epsilon \log\Big(\frac{V(X(s-)+h(s,E_s,X(s-),y))}{V(X(s-))}\Big)\Big]\nu(dy)dE_s\Big\}\Big]\leq \epsilon \Big]=1
\end{aligned}
\end{equation}

Hence for almost all $w\in\Omega$ there exists an  integer $N$ such that for all $n\geq N$, ${t_0}\leq t\leq n$,
\begin{equation}
\begin{aligned}
M(t) \leq & \frac{\epsilon}{2}\int_{t_0}^t\frac{(\partial V(X(s-))g(s,E_s,X(s-)))^2}{V(X(s-))^2}dE_s+\epsilon n\\
&+\frac{1}{\epsilon}\int_{t_0}^t\int_{|y|<c}\Big[\exp\Big( \log\Big(\frac{V(X(s-)+h(s,E_s,X(s-),y))}{V(X(s-))}\Big)^{\color{green}\epsilon}\Big)-1\\
&+\epsilon \log\Big(\frac{V(X(s-)+h(s,E_s,X(s-),y))}{V(X(s-))}\Big)\Big]\nu(dy)dE_s
\end{aligned}
\end{equation}

Thus,
\begin{equation}
\begin{aligned}
\log|V(X(t))|\leq & \log|V(x_0)|+c_2(t-t_0)+c_3(E_t-E_{t_0})+I_2(t)\\
&-\frac{1}{2}\int_{t_0}^t\frac{(\partial V(X(s-))g(s,E_s,X(s-)))^2}{V(X(s-))^2}dE_s\\
&+\frac{\epsilon}{2}\int_{t_0}^t\frac{(\partial V(X(s-))g(s,E_s,X(s-)))^2}{V(X(s-))^2}dE_s+\epsilon n\\
&+\frac{1}{\epsilon}\int_{t_0}^t\int_{|y|<c}\Big[\exp\Big( \log\Big(\frac{V(X(s-)+h(s,E_s,X(s-),y))}{V(X(s-))}\Big)^\epsilon\Big)-1\\
&+\epsilon \log\Big(\frac{V(X(s-)+h(s,E_s,X(s-),y))}{V(X(s-))}\Big)\Big]\nu(dy)dE_s\\
\leq& \log|V(x_0)|+c_2(t-t_0)+c_3(E_t-E_{t_0})+I_2(t)-\frac{1-\epsilon}{2}c_4(E_t-E_{t_0})+\epsilon n \\
&+\frac{1}{\epsilon}\int_{t_0}^t\int_{|y|<c}\Big[\exp\Big( \log\Big(\frac{V(X(s-)+h(s,E_s,X(s-),y))}{V(X(s-))}\Big)^\epsilon\Big)-1\\
&+\epsilon \log\Big(\frac{V(X(s-)+h(s,E_s,X(s-),y))}{V(X(s-))}\Big)\Big]\nu(dy)dE_s\\
\end{aligned}
\end{equation}
for $n\geq N,\ {t_0}\leq t\leq n$.

Letting $\epsilon\rightarrow 0$, we have

\begin{equation}
\log|V(X(t))|\leq \log|V(x_0)|+c_2(t-t_0)+c_3(E_t-E_{t_0})-\frac{1}{2}c_4(E_t-E_{t_0})+I_2(t)\\
\end{equation}

The details can be found in Theorem 3.4.8 in Siakalli's \cite{sia} with certain simple modifications. By condition (v), $I_2(t)\leq -c_5(E_t-E_{t_0})$, thus applying condition (i)
\begin{equation}
\log|X(t)|\leq \frac{1}{p}\log|\frac{V(X(t))}{c_1}|\leq \frac{1}{p}\Big[ \log|V(x_0)|-\log(c_1)+c_2(t-t_0)+(c_3-\frac{1}{2}c_4-c_5)(E_t-E_{t_0})\Big].
\end{equation}

When $f\neq 0$, then $c_2\neq 0$, thus, for almost all $w\in\Omega$, $n-1\leq t\leq n$, $n\geq N$,
\begin{equation}
\frac{1}{t}\log|V(X(t))|\leq \frac{1}{p}\Big[\frac{\log|V(x_0)|-\log(c_1)}{t}+\frac{c_2(t-t_0)}{t}+\frac{(c_3-\frac{1}{2}c_4-c_5)(E_t-E_{t_0})}{t}\Big],
\end{equation}
then by Lemma \ref{lemma-SLLN}

\begin{equation}
\limsup_{t\rightarrow \infty}\frac{1}{t}\log|V(X(t))|\leq \frac{c_2}{p}\  \ \ a.s.
\end{equation}

When $f=0$, then $c_2=0$, thus
\begin{equation}
\log|X(t)|\leq \frac{1}{p}\log|\frac{V(X(t))}{c_1}|\leq \frac{1}{p}\Big[\log|V(x_0)|-\log(c_1)+c_3(E_t-E_{t_0})-\frac{1}{2}c_4(E_t-E_{t_0})-c_5(E_t-E_{t_0})\Big],
\end{equation}
consequently,
\begin{equation}
\limsup_{t\rightarrow \infty}\frac{1}{E_t}\log|X(t)|\leq \frac{1}{2p}\Big(c_3-\frac{1}{2}c_4-c_5\Big)\  \ \ a.s..
\end{equation}

\end{proof}

\begin{rk} From the proof of  the previous theorem, when $f=0$,  we can deduce the following.
When $\lim_{t\rightarrow \infty}\frac{E_t}{t}=0$ a.s., the following estimation is also true.
\begin{equation}
\limsup_{t\rightarrow \infty}\frac{1}{t}\log|X(t)|\leq 0\  \ \ a.s..
\end{equation}
\end{rk}

\begin{exmp}\label{exmp0}
Consider the following stochastic differential equation
\begin{equation}\label{example0}
dX(t)=-X(t-)^{\frac{3}{2}}dE_t+X(t-)dB_{E_t}+\int_{|y|\leq 1}X(t-)y^2\tilde{N}(dE_t,dy),
\end{equation}
with $X(0)=1$, $\nu$ is uniform distribution $[0,1]$.

Choose the  Lyapunov function as  $V(x)=x^{\frac{3}{2}}$  which satisfies the conditions (i) and (ii) in Theorem \ref{1stthm}. Furthermore,
\begin{equation}
\begin{aligned}
L_2V(x)&=-\frac{3}{2}x^2+\frac{3}{8}x^{\frac{3}{2}}+\Big[\int_{|y|\leq 1}\big[(1+y^2)^{\frac{3}{2}}-1-\frac{3}{2}y^2\big]\nu(dy)\Big]x^{\frac{3}{2}}\\
&=x^{\frac{3}{2}}\Big[ -\frac{3}{2}x^{\frac{1}{2}}+\frac{3}{8}+\int_{|y|\leq 1}\big[(1+y^2)^{\frac{3}{2}}-1-\frac{3}{2}y^2\big]\nu(dy) \Big]\\
&\leq x^{\frac{3}{2}}\Big[\frac{3}{8}+\int_{|y|\leq 1}[(1+y^2)^{\frac{3}{2}}-1-\frac{3}{2}y^2]\nu(dy) \Big]\\
&\leq V(x).
\end{aligned}
\end{equation}

The last inequality is derived by the following argument,
Let $f(y)=(1+y^2)^{\frac{3}{2}}-1-\frac{3}{2}y^2$, then $f'(y)>0$ for $0\leq y \leq 1$ and $f'(y)<0$ for $-1\leq y \leq 0$. Thus $f(y)\leq f(1)=f(-1)=.33$, for $-1\leq y \leq 1$. Since $\nu$ is assumed to be the standard normal distribution, $\int_{|y|\leq 1}[(1+y^2)^{\frac{3}{2}}-1-\frac{3}{2}y^2]\nu(dy)=\int_{|y|\leq 1}f(y)\nu(dy)\leq .33\int_{|y|\leq 1}\nu(dy)<.33.$ Thus, $ x^{\frac{3}{2}}\Big[\frac{3}{8}+\int_{|y|\leq 1}[(1+y^2)^{\frac{3}{2}}-1-\frac{3}{2}y^2]\nu(dy) \Big]\leq x^{\frac{3}{2}}[\frac{3}{8}+.33]\leq x^{\frac{3}{2}}=V(x).$

In addition, $|V_x(x)g(x)^2|=|\frac{3}{2}x^{\frac{1}{2}}x|^2=\frac{9}{4}V(x)^2$ and
\begin{equation}
\begin{aligned}
&\int_{|y|\leq 1}\Big[\log\Big( \frac{(x+xy^2)}{x} \Big)^{\frac{3}{2}}-\frac{(x+xy^2)^\frac{3}{2}-x^\frac{3}{2}}{x^\frac{3}{2}}\Big]\nu(dy)\\
=&\int_{|y|\leq 1}\Big[\frac{3}{2}\log(1+y^2)-(1+y^2)^\frac{3}{2}+1\Big]\nu(dy)<-.018.
\end{aligned}
\end{equation}
Similar as above, the last inequality can be proved as following. Let $f(y)=\frac{3}{2}\log(1+y^2)-(1+y^2)^\frac{3}{2}+1$, then $f'(y)<0$ for $0\leq y \leq 1$ and $f'(y)>0$ for $-1\leq y \leq 0$. Thus

\begin{equation}
\begin{aligned}
&\int_{|y|\leq 1}\Big[\frac{3}{2}\log(1+y^2)-(1+y^2)^\frac{3}{2}+1\Big]\nu(dy)=\int_{|y|\leq 1} f(y)\nu(dy)\\
\leq & \int_{.5 \leq |y|\leq 1}f(y)\nu(dy)=2\int_{.5 \leq y\leq 1}f(y)\nu(dy)\leq 2\int_{.5 \leq y\leq 1}f(.5)\nu(dy)\\
<& 2\int_{.5 \leq y\leq 1}-.062\nu(dy)=-.124\int_{.5 \leq y\leq 1}\nu(dy)=-.124 [\Phi(1)-\Phi(.5)]\\=&-.124(.8413-.6915)<-.018\\
\end{aligned}
\end{equation}

The constants of Theorem \ref{1stthm} are $c_3=1,\ c_4=2.25,\ c_5=.018$, then $\frac{1}{2\times \frac{3}{2}}\Big(c_3-\frac{1}{2}c_4-c_5\Big)=-.0477<0$, thus the trivial solution of stochastic differential equation \eqref{example0} is almost surely path stable.  A simulation of a path of SDE  in equation \eqref{example0} is given in {\bf Figure \ref{fig:figure0}}, it can be observed that $\frac{\log(X(t))}{E_t}$ is strictly below $0$ when $t$ is large, which illustrates our analysis above.
\begin{figure}
\centering
\caption{$\log(X(t))/E_t$ of SDE \eqref{example0}}
\includegraphics[scale=.4]{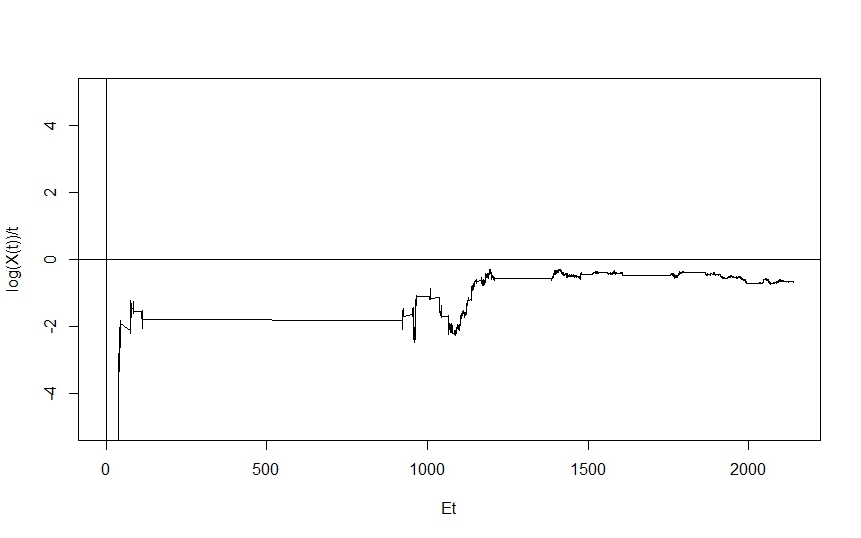}

\label{fig:figure0}
\end{figure}
\end{exmp}

\begin{rk}
Note that $f(x)=x^{\frac{3}{2}}$ fails to be a Lipschitz function  and does not have linear growth condition. However, existence of unique solution to \eqref{example0} is guaranteed by Theorem 3.5 on page 58 of Mao \cite{maotext}.
\end{rk}

\begin{rk}
In the figures of all examples, we assume that $E(t)$ is the inverse of stable subordinator with parameter $\alpha=.8$.
\end{rk}
~\\

\subsection{Stochastic Differential Equation \eqref{aimfinal} driven by Time-Changed L\'evy Noise including Large Jumps}~\\

First, let us discuss exponential stability of the following time-changed SDE with noise that has  only small linear jump
\begin{equation}\label{aim11}
\begin{aligned}
dX(t)&=f(t, E_t, X(t-))dt+k(t, E_t, X(t-))dE_t+g(t, E_t, X(t-))dB_{E_t}\\
&+\int_{|y|<c}h(y)X(t-)\tilde{N}(dE_t,dy),
\end{aligned}
\end{equation}
with $X(t_0)=x_0$, which is a special case of \eqref{SDE} when $h(t_1,t_2,x,y)=h(x)y$. Then we extend \eqref{aim11} to \eqref{aimfinal} by adding large jumps $\int_{|y|\geq c} H(y)X(t-)N(dE_t,dy)$ .

\begin{assumption}\label{assumption3}
\begin{equation}
Z_c=\int_{|y|<c}(|h(y)| \bigvee |h(y)|^2)\nu(dy)<\infty,
\end{equation}
for all  $t_1, t_2\in \mathbb{R^+}$.
\end{assumption}


\begin{tm}\label{2ndthm}
Given Assumptions  \ref{preass1} and \ref{assumption3}, suppose that there exist $\xi>0, \gamma\geq 0, \delta\geq 0, K_1,K_2\in \mathbb{R}$ such that the following conditions\\
\begin{equation}
\begin{aligned}
&(1) \gamma|x|^2\leq |g(t_1,t_2,x)|^2\leq \xi|x|^2, \ (2) \int_{|y|<c}h(y)\nu(dy)\geq\delta\\
&(3)f(t_1,t_2,x)x\leq K_1|x|^2,\ (4) k(t_1,t_2,x)x \leq K_2|x|^2
\end{aligned}
\end{equation}
 are satisfied for all $x\in \mathbb{R}$ and $t_1, t_2\in \mathbb{R^+}$. Then when $f\neq0$ and $\lim_{t\to\infty}\frac{E_t}{t}=0$ a.s., we have
\begin{equation}
\limsup_{t\rightarrow \infty} \frac{1}{t}\log |X(t)|\leq K_1\ a.s.
\end{equation}
for any $x_0\neq 0$, the trivial solution of \eqref{aim11} is almost surely exponential path  stable if $K_1<0$;
when $f=0$, we have
\begin{equation}
\limsup_{t\rightarrow \infty} \frac{1}{E_t}\log |X(t)|\leq -\Big(\gamma-K_2-\frac{\xi}{2}-\int_{|y|<c}\log(1+|h(y)|)\nu(dy)+\delta\Big)\ a.s.
\end{equation}
for any $x_0\neq 0$, the trivial solution of \eqref{aim11} is almost surely path stable if $\gamma>K_2+\frac{\xi}{2}+\int_{|y|<c}\log(1+|h(y)|)\nu(dy)-\delta$.
\end{tm}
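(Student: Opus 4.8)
The plan is to run the argument of Theorem \ref{1stthm} with the quadratic Lyapunov function $V(x)=x^{2}$, for which the five structural hypotheses of that theorem reduce to conditions (1)--(4). First I would note that \eqref{aim11} is exactly the special case of \eqref{SDE} with $h(t_{1},t_{2},x,y)=h(y)x$; this $h$ is Lipschitz in $x$ with linear growth as soon as $\int_{|y|<c}h(y)^{2}\,\nu(dy)<\infty$, which holds by Assumption \ref{assumption3}, so Assumptions \ref{lip}--\ref{linear} are in force and \eqref{aim11} has a unique solution. Assumption \ref{preass1} and the lemma following it give $P(X(t)\neq 0 \text{ for all } t\geq t_{0})=1$ when $x_{0}\neq 0$, so along almost every path $t\mapsto \log V(X(t))=2\log|X(t)|$ is well defined and the time-changed It\^o formula \eqref{itolevy} applies to it.

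I would then verify conditions (i)--(v) of Theorem \ref{1stthm} for $V(x)=x^{2}$ with $p=2$ and $c_{1}=1$, extracting the constants. Condition (i) is trivial. Since $V$ depends only on $x$, \eqref{linearop} gives $L_{1}V(x)=2x\,f(t_{1},t_{2},x)$ and, using the exact cancellation $(x+h(y)x)^{2}-x^{2}-2x\,h(y)x=h(y)^{2}x^{2}$,
\[
L_{2}V(x)=2x\,k(t_{1},t_{2},x)+g(t_{1},t_{2},x)^{2}+x^{2}\!\!\int_{|y|<c}\!h(y)^{2}\,\nu(dy).
\]
Conditions (3), (4) and (1) then yield (ii), (iii) with $c_{2}=2K_{1}$ and $c_{3}=2K_{2}+\xi+\int_{|y|<c}h(y)^{2}\,\nu(dy)$, and (iv) with $c_{4}=4\gamma$, since $|(\partial_{x}V(x))g(t_{1},t_{2},x)|^{2}=4x^{2}g(t_{1},t_{2},x)^{2}\geq 4\gamma V(x)^{2}$. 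For (v), since $V(x+h(y)x)/V(x)=(1+h(y))^{2}$ and $|1+h(y)|\leq 1+|h(y)|$, the integrand in (v) equals $2\log|1+h(y)|-2h(y)-h(y)^{2}\leq 2\log(1+|h(y)|)-2h(y)-h(y)^{2}$, which is $\nu$-integrable on $\{|y|<c\}$ because $\log(1+|h(y)|)\leq|h(y)|$ and $Z_{c}<\infty$; hence (v) holds with $c_{5}=\int_{|y|<c}\big[2h(y)+h(y)^{2}-2\log(1+|h(y)|)\big]\nu(dy)$. Two points to keep in mind: the argument of Theorem \ref{1stthm} uses (v) only through the bound $I_{2}(t)\leq -c_{5}(E_{t}-E_{t_{0}})$, so the sign of $c_{5}$ is immaterial here; and the applications of the time-changed exponential martingale inequality \eqref{expmarine} and of Borel--Cantelli proceed exactly as in that proof, the Brownian integrand $2g(t_{1},t_{2},x)/x$ being uniformly bounded by $2\sqrt{\xi}$ thanks to condition (1).

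Finally I would feed the constants into the two conclusions of Theorem \ref{1stthm}. When $f\neq 0$ and $\lim_{t\to\infty}E_{t}/t=0$ a.s., this gives $\limsup_{t\to\infty}\frac{1}{t}\log|X(t)|\leq c_{2}/p=K_{1}$ a.s., hence a.s.\ exponential path stability if $K_{1}<0$. When $f=0$, it gives $\limsup_{t\to\infty}\frac{1}{E_{t}}\log|X(t)|\leq \frac{1}{p}\big(c_{3}-\tfrac{1}{2}c_{4}-c_{5}\big)$ a.s.; substituting, the right-hand side collapses to $K_{2}+\tfrac{\xi}{2}-\gamma-\int_{|y|<c}h(y)\,\nu(dy)+\int_{|y|<c}\log(1+|h(y)|)\,\nu(dy)$, and condition (2) bounds it above by $-\big(\gamma-K_{2}-\tfrac{\xi}{2}-\int_{|y|<c}\log(1+|h(y)|)\,\nu(dy)+\delta\big)$, which is the claimed estimate; taking the rate function in the definition of path stability to be $E_{t}$ (which tends to $\infty$ a.s.) then gives a.s.\ path stability precisely when that bracket is positive.

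The only step demanding genuine care is the bookkeeping of the compensated-jump term — the cancellation producing $L_{2}V(x)=2xk+g^{2}+x^{2}\int_{|y|<c}h(y)^{2}\,\nu(dy)$ and the resulting form of $c_{5}$ — together with confirming that each integral that appears ($\int h^{2}\,\nu$, $\int\log(1+|h|)\,\nu$, and those entering \eqref{expmarine}) is finite; all of this follows from $Z_{c}<\infty$ and Assumption \ref{preass1}. Everything else is a verbatim specialization of the proof of Theorem \ref{1stthm}.
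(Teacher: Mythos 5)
Your proposal is correct and coincides with the paper's own ``alternate proof'' of this theorem, which likewise specializes Theorem \ref{1stthm} to $V(x)=|x|^2$ with $c_2=2K_1$, $c_3=2K_2+\xi+\int_{|y|<c}|h(y)|^2\nu(dy)$, $c_4=4\gamma$ and a $c_5$ built from condition (v); the paper additionally gives a direct proof via the martingale strong law, but your route is one the authors themselves take. Your constant-tracking is in fact slightly cleaner than the paper's (you retain the factor $2$ in $\log\big((1+|h(y)|)^2\big)$ that the printed computation drops), and your remark that only the bound $I_2(t)\leq -c_5(E_t-E_{t_0})$ is used, so the sign of $c_5$ is immaterial, correctly addresses the one hypothesis of Theorem \ref{1stthm} that need not hold verbatim.
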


\begin{proof}[Proof of Theorem \ref{2ndthm}]
Fix $x_0\neq 0$, then by It\^o formula for time-changed SDE, see Lemma 3.1 in \cite{erni2}, we have
\begin{equation}
\begin{aligned}
\log (|X(t)|^2)=&\log(|x_0|^2)+\int_{t_0}^tL_1 \log(|X(s-)|^2)ds+\int_{t_0}^tL_2\log(|X(s-)|^2)dE_s\\
&+\int_{t_0}^t\int_{|y|<c}\Big[\log(|X(s-)+X(s-)h(s,E_s,y)|^2)-\log(|X(s-)|^2)\Big]\tilde{N}(dE_s,dy)\\
&+\int_{t_0}^t\int_{|y|<c} \frac{d}{dx}\log(|X(s-)|^2)g(s,E_s,X(s-))dB_{E_s},
\end{aligned}
\end{equation}
where
\begin{equation}
L_1\log(|X(s-)|^2)=\frac{2X(s-)}{|X(s-)|^2}f(s,E_s,X(s-))\leq 2K_1,\\
\end{equation}
\begin{equation}\label{L2}
\begin{aligned}
L_2\log(|X(s-)|^2)dE_s=&\frac{2X(s-)}{|X(s-)|^2}k(s,E_s,X(s-))-\frac{|g(s,E_s,X(s-))|^2}{|X(s-)|^2}\\
&+\int_{|y|<c}\Big[ \log(|X(s-)+h(y)X(s-)|^2)-\log(|X(s-)|^2)-2h(y)\Big]\nu(dy).
\end{aligned}
\end{equation}

Applying condition (2) and Assumption \ref{assumption3} to \eqref{L2},

\begin{equation}
\begin{aligned}
\int_{t_0}^t L_2&\log(|X(s-)|^2)dE_s=\int_{t_0}^t \Big[ \frac{2X(s-)}{|X(s-)|^2}k(s,E_s,X(s-))-\frac{|g(s,E_s,X(s-))|^2}{|X(s-)|^2}\Big] dE_s\\
+&\int_{t_0}^t \Big[\int_{|y|<c}\big[ \log(|X(s-)+h(y)X(s-)|^2)-\log(|X(s-)|^2)-2h(y) \big]\nu(dy)\Big]dE_s\\
\leq&\int_{t_0}^t \Big[\frac{2K_2|X(s-)|^2}{|X(s-)|^2}+(\xi-2\gamma)\Big] dE_s+\int_{t_0}^t \Big[ \int_{|y|<c}\big[\log((1+|h(y)|)^2)\big]\nu(dy)-2\delta \Big] dE_s\\
\leq&\int_{t_0}^t 2K_2dE_s+2(E_t-E_{t_0})\int_{|y|<c}\Big[\log((1+|h(y)|))\Big]\nu(dy)\\
&-(2\gamma+2\delta-\xi)(E_t-E_{t_0})\\
\leq&(E_t-E_{t_0})\Big[2\int_{|y|<c}\log(1+|h(y)|)\nu(dy)+2K_2+\xi-2\gamma-2\delta\Big]
\end{aligned}
\end{equation}

Note that both
\begin{equation}
M_1(t)=\int_{t_0}^t\int_{|y|<c} \frac{d}{dx}\log(|X(s-)|^2)g(s,E_s,X(s-))dB_{E_s}
\end{equation}
and
\begin{equation}
M_2(t)=\int_{t_0}^t\int_{|y|<c}\Big[\log(|X(s-)+X(s-)h(y)|^2)-\log(|X(s-)|^2)\Big]\tilde{N}(dE_s,dy)
\end{equation}
are martingales.

Now,
\begin{equation}
\begin{aligned}
\log (|X(t)|^2)\leq&\log(|x_0|^2)+2K_1(t-t_0)+M_1(t)+M_2(t)\\
&+(E_t-E_{t_0})\Big(2\int_{|y|<c}\log(1+|h(y)|)\nu(dy)+2K_2+\xi-2\gamma-2\delta\Big).
\end{aligned}
\end{equation}

Define corresponding non-time-changed stochastic process $\{z_t\}_{t\geq0}$ by
\begin{equation}
z(t)=z(t_0)+\int_{t_0}^tf(s,z(s-))dt+\int_{t_0}^t g(s,z(s-))dB(t)+\int_{t_0}^t\int_{|y|<c}h(y)z(s-)\tilde{N}(ds,dy),
\end{equation}
with $z(t_0)=x_0$.
{
By the duality theorem 4.2 in \cite{keib},} $X(t)=z(E_t)$ for $t\geq t_0$.

By  the result on page 282 in  Mao \cite{maotext},
\begin{equation}
\begin{aligned}
\langle M_1\rangle(t)=&\langle 2\int_{E_{t_0}}^{E_t} \frac{z(s-)g(s,z(s-))}{|z(s-)|^2}dB_k(s)\rangle\\
=&4\int_{E_{t_0}}^{E_t}\frac{|z(s-)g(s,z(s-))|^2}{|z(s-)|^4}ds\\
\leq & 4\xi (E_t-E_{t_0}).
\end{aligned}
\end{equation}

Define $\rho_{M_1}(t)=\int_{t_0}^t\frac{d\langle M_1 \rangle (s)}{(1+E_s)^2}$, then
\begin{equation}
\rho_{M_1}(t) \leq 4\xi\int_{t_0}^t\frac{dE_s}{(1+E_s)^2}=4\xi\int_{E_{t_0}}^{E_t}\frac{ds}{(1+s)^2}=\frac{-4\xi}{1+s}\Big|_{E_{t_0}}^{E_t}=4\xi\big[\frac{1}{1+{E_{t_0}}}-\frac{1}{1+E_t}\big],
\end{equation}
then
\begin{equation}
\lim_{t\rightarrow \infty}\rho_{M_1}(t)\leq \lim_{t\rightarrow \infty}4\xi\big[\frac{1}{1+{E_{t_0}}}-\frac{1}{1+E_t}\big])\leq4\xi<\infty.
\end{equation}

By Theorem 10 of Chapter 2 in \cite{lips},
\begin{equation}
\lim_{t\rightarrow \infty}\frac{M_1(t)}{E_t}=0,\ a.s..
\end{equation}

Similarly,
\begin{equation}
\begin{aligned}
\langle M_2\rangle(t)=&\int_{E_{t_0}}^{E_t}\int_{|y|<c}[\log(\frac{z(s-)+z(s-)h(y)}{|z(s-)|^2})]^2\nu(dy)ds\\
\leq& \int_{E_{t_0}}^{E_t}\int_{|y|<c}[\log((1+|h(y)|)^2)]^2\nu(dy)ds\\
\leq& 4\int_{E_{t_0}}^{E_t}\int_{|y|<c}|h(y)|^2\nu(dy)ds\\
\leq& 4Z_c (E_t-E_{t_0}),
\end{aligned}
\end{equation}
so
\begin{equation}
\lim_{t\rightarrow \infty}\rho_{M_2}(t)\leq\lim_{t\rightarrow \infty}4Z_c \int_{t_0}^t\frac{dE_s}{(1+E_s)^2}  <\infty\ a.s..
\end{equation}

As a result, \begin{equation}
\lim_{t\rightarrow \infty}\frac{M_2(t)}{E_t}=0,\ a.s..
\end{equation}

In the end, since
\begin{equation}
\lim_{t\rightarrow \infty}\frac{E_t}{t}=0,\ a.s.,
\end{equation}
and
\begin{equation}
\begin{aligned}
\frac{\log|X(t)|}{t}\leq& \frac{\log|x_0|}{t}+\frac{2K_1(t-t_0)}{t}+\frac{(E_t-E_{t_0})(\int_{|y|<c}\log(1+|h(y)|)\nu(dy)+K_2+\frac{\xi}{2}-\gamma-\delta)}{t}\\
+&\frac{M_1(t)}{2E_t}\frac{E_t}{t}+\frac{M_2(t)}{2E_t}\frac{E_t}{t}
\end{aligned}
\end{equation}
thus,
\begin{equation}
\limsup_{t\rightarrow \infty}\frac{\log|X(t)|}{t}\leq K_1\ a.s..
\end{equation}

When $f=0$,
\begin{equation}
\frac{\log|X(t)|}{E_t}\leq \frac{\log|x_0|}{E_t}+\frac{(E_t-E_{t_0})(\int_{|y|<c}\log(1+|h(y)|)\nu(dy)+K_2+\frac{\xi}{2}-\gamma-\delta)}{E_t}+\frac{M_1(t)}{2E_t}+\frac{M_2(t)}{2E_t}
\end{equation}
thus,
\begin{equation}
\limsup_{t\rightarrow \infty}\frac{\log|X(t)|}{E_t}\leq \int_{|y|<c}\log(1+|h(y)|)\nu(dy)+K_2+\frac{\xi}{2}-\gamma-\delta\ a.s..
\end{equation}

\end{proof}

Other than the direct proof above, the following is an alternative  proof utilizing Theorem \ref{1stthm}.

\begin{proof}[\bf Alternate Proof of Theorem \ref{2ndthm}]

Let $V(x)=|x|^2$, then $V\in C^2(\mathbb{R},\mathbb{R}^+)$ and condition (i) in Theorem \ref{1stthm} is satisfied.

Next, by applying  the time-changed It\^o formula to $V(X(t))$,
$L_1V(x)=f(t_1,t_2,x)2x\leq 2K_1V(x)$, thus condition (ii) in Theorem \ref{1stthm} is satisfied;
\begin{equation}
\begin{aligned}
L_2V(x)&=k(t_1,t_2,x)2x+|g(t_1,t_2,x)|^2+\int_{|y|<c}\Big[|x+h(y)x|^2-|x|^2-h(y)x2x\Big]\nu(dy)\\
&\leq 2K_2|x|^2+|g(t_1,t_2,x)|^2+\int_{|y|<c}|x|^2\Big[(1+h(y))^2-1-2h(y)\Big]\nu(dy)\\
&\leq \Big[ 2K_2+\xi+\int_{|y|<c} |h(y)|^2\nu(dy)\Big] |x|^2<\infty,
\end{aligned}
\end{equation}
thus, condition (iii) in Theorem \ref{1stthm} is satisfied by Assumption \ref{assumption3} and setting $c_3= 2K_2+\xi+\int_{|y|<c} |h(y)|^2\nu(dy)$.

Condition (iv) is satisfied since
\begin{equation}
|(\partial_x V(x))g(t_1,t_2,x)|^2=| 2xg(t_1,t_2,x)|^2\geq 4\gamma|x|^4.
\end{equation}

For the last condition (v), by denoting $c_5=-\int_{|y|<c}\Big[\log( 1+|h(y)|)-|h(y)|^2\Big]\nu(dy)-2\delta$ we have
\begin{equation}
\begin{aligned}
&\int_{|y|<c}\Big[\log\Big( \frac{V(x+h(y)x)}{V(x)}\Big)-\frac{V(x+h(y)x)-V(x)}{V(x)}\Big]\nu(dy)\\
=&\int_{|y|<c}\Big[\log\Big( \frac{|x+h(y)x|^2}{|x|^2}\Big)-\frac{|x+h(y)x|^2-|x|^2}{|x|^2}\Big]\nu(dy)\\
\leq & \int_{|y|<c}\Big[\log( 1+|h(y)|)-\frac{2xh(y)x+|h(y)x|^2}{|x|^2}\Big]\nu(dy)\\
\leq & \int_{|y|<c}\Big[\log( 1+|h(y)|)-|h(y)|^2\Big]\nu(dy)-2\delta<0.
\end{aligned}
\end{equation}

Since all five conditions in Theorem \ref{1stthm} are satisfied,  we have that when $f\neq 0$,
\begin{equation}
\limsup_{t\rightarrow \infty}\frac{1}{t}\log|X(t)|\leq K_1\ \ a.s.;
\end{equation}
and that when $f=0$,
\begin{equation}
\begin{aligned}
\limsup_{t\rightarrow \infty}&\frac{1}{E_t}\log|X(t)|\\
&\leq \frac{1}{2}\Big(2K_2+\xi-\int_{|y|<c} |h(y)|^2\nu(dy)-\frac{4\gamma}{2}-\int_{|y|<c}\Big[\log( 1+|h(y)|)-|h(y)|^2\Big]\nu(dy)-2\delta\Big)\\
&=-\Big(-K_2-\frac{\xi}{2}+\gamma-\int_{|y|<c}\Big[\log( 1+|h(y)|_1)\Big]\nu(dy))+\delta\Big)\ \ a.s.
\end{aligned}
\end{equation}
as desired.

\end{proof}

\begin{exmp}\label{exmp00}
Consider the following stochastic differential equation
\begin{equation}\label{example00}
dX(t)=-sin(X(t-))X(t-)dE_t+\frac{X(t-)}{E_t+1}dB_{E_t}+\int_{|y|\leq 1}16X(t-)y^2\tilde{N}(dE_t,dy),
\end{equation}
with $X(0)=1$, $\nu$ is uniform distribution $[0,1]$.

Applying Theorem \ref{2ndthm}, $0\leq |g(x,t_1,t_2)^2|\leq |x|^2$, $\int_{|y|\leq 1}h(y)\nu(dy)\geq \frac{16}{3}$ and $k(t_1,t_2,x)x\leq |x|^2$, thus $\gamma=0,\ \xi=1,\ \delta=\frac{16}{3},\ K_2=1$.
\begin{equation}
\begin{aligned}
\limsup_{t\rightarrow \infty} \frac{1}{E_t}\log |X(t)|&\leq -\Big(\gamma-K_2-\frac{\xi}{2}-\int_{|y|<c}\log(1+|h(y)|)\nu(dy)+\delta\Big)\\
&=-(0-1-\frac{1}{2}-\log(17)+\frac{16}{3})<0\ a.s..
\end{aligned}
\end{equation}

Hence, stochastic differential equation \eqref{example00} is almost surely path stable. The simulated path of SDE \eqref{example00} is given in {\bf Figure \ref{figure00}.} The ratio of $\frac{\log|X(t)|}{E_t}$ is strictly below $0$ for large time t, this is consistent with above analysis.
\begin{figure}
\centering
\caption{$\log(X(t))/E_t$ of SDE \eqref{example00}}
\includegraphics[scale=.4]{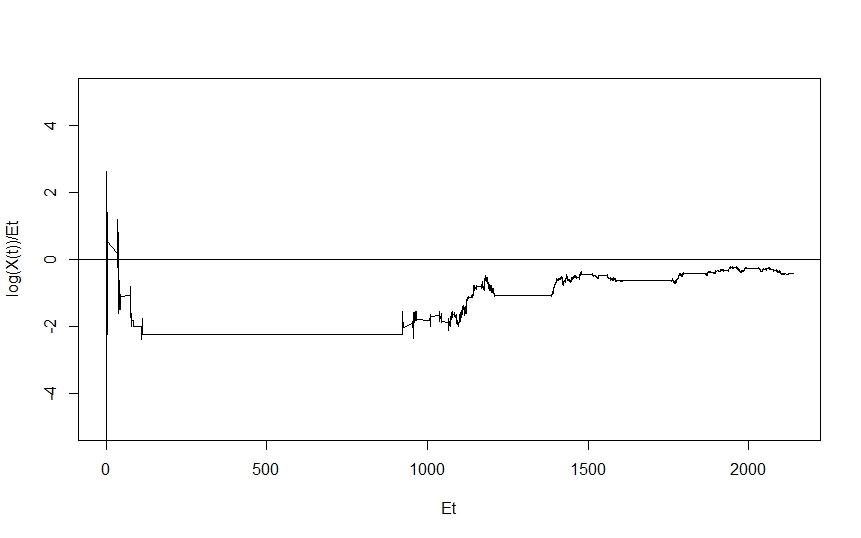}

\label{figure00}
\end{figure}
\end{exmp}

Next, we analyze the following time-changed stochastic differential equation involving large jumps,
\begin{equation}\label{sdelarge}
dX(t)=\int_{|y|\geq c} H(y)X(t-)N(dE_t,dy),
\end{equation}
with $X(t_0)=x_0\in \mathbb{R}$ and  $H:\mathbb{R}\rightarrow \mathbb{R}$ is a measurable function.

Before stating  the next theorem, we need another assumption, see \cite{sia}.

\begin{assumption}\label{assumptionE}
Assume that
\begin{equation}
\int_{|y|\geq c} ||H(y)||_1^2\nu(dy)<\infty
\end{equation}
and that $H(y)\neq -1$ for $|y|\geq c$.
\end{assumption}

By above assumption, the function $H(y)x$ satisfies  Lipschitz and growth conditions, assuring the existence and uniqueness of solution to equation \eqref{sdelarge}. In addition, $H(y)\neq -1$ implies that $P(X(t) \neq 0\ for\ all\ t\geq t_0 )=1$, this is an application of interlacing technique in \cite{apple}, details can be found in Lemma 4.3.2 in \cite{sia} with simple modification.

\begin{tm}\label{thm2}
If
\begin{equation}
\sup_{x\in \mathbb{R}-0}\int_{|y|\geq c}\Big[\log(|x+H(y)x|)-\log(|x|) \Big]\nu(dy)<-K,
\end{equation} for some $K>0$, then the sample Lyapunov exponent of solution of \eqref{sdelarge} exists and satisfies
\begin{equation}
\limsup_{t\rightarrow \infty}\frac{1}{E_t} \log|X(t)|\leq -2K\ \  a.s.,
\end{equation}
for any $x_0\neq 0$, that is, the trivial solution of \eqref{sdelarge} is almost surely path stable.
\end{tm}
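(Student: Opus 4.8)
The plan is to mimic the Lyapunov-function computation of Theorem \ref{1stthm} and Theorem \ref{2ndthm}, but now with only the large-jump integral present and with $V(x)=|x|^2$ (equivalently, working with $\log|X(t)|^2$ directly). First I would apply the time-changed It\^o formula from Lemma \ref{itofor} to $F(t,E_t,X(t))=\log(|X(t)|^2)$. Because equation \eqref{sdelarge} has no $dt$-term, no $dE_t$-term, no Brownian term, and no compensated small-jump term, every operator $L_1F$ and $L_2F$ drops out and the entire increment is carried by the large-jump integral against the (uncompensated) Poisson measure $N(dE_s,dy)$. Concretely, I expect to get, for $t\ge t_0$,
\begin{equation}
\log(|X(t)|^2)=\log(|x_0|^2)+\int_{t_0}^t\int_{|y|\ge c}\Big[\log(|X(s-)+H(y)X(s-)|^2)-\log(|X(s-)|^2)\Big]N(dE_s,dy),
\end{equation}
which is valid by Assumption \ref{assumptionE}, since $H(y)\neq-1$ guarantees $X(s-)+H(y)X(s-)\neq0$ so that $\log$ is applied only at nonzero points, and the earlier remark (interlacing technique) ensures $X(t)\neq0$ for all $t\ge t_0$ a.s.

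Next I would split $N=\tilde N+\nu\,dE_s$ and write the right-hand side as a martingale part $M(t)$ plus a drift part. The drift part is
\begin{equation}
\int_{t_0}^t\Big(\int_{|y|\ge c}\big[\log(|X(s-)+H(y)X(s-)|^2)-\log(|X(s-)|^2)\big]\nu(dy)\Big)dE_s.
\end{equation}
Using the homogeneity $|x+H(y)x|=|x|\,|1+H(y)|$, the inner integrand equals $2\big[\log|x+H(y)x|-\log|x|\big]$, which by the hypothesis is bounded above by $-2K$ uniformly in $x\neq0$. Hence the drift part is $\le -2K(E_t-E_{t_0})$. For the martingale part $M(t)=\int_{t_0}^t\int_{|y|\ge c}[\cdots]\tilde N(dE_s,dy)$, I would pass through the duality theorem (Theorem 4.2 of \cite{keib}), writing $X(t)=z(E_t)$ where $z$ solves the corresponding non-time-changed SDE, so that $M(t)$ becomes a time-changed non-time-changed martingale whose predictable quadratic variation is
\begin{equation}
\langle M\rangle(t)=\int_{E_{t_0}}^{E_t}\int_{|y|\ge c}\big[\log((1+|H(y)|)^2)\big]^2\nu(dy)\,ds\le 4\Big(\int_{|y|\ge c}\|H(y)\|_1^2\nu(dy)\Big)(E_t-E_{t_0}),
\end{equation}
finite by Assumption \ref{assumptionE} (after the elementary bound $|\log(1+u)|\le u$ on the relevant range, or directly $\log((1+|H(y)|)^2)\le 2|H(y)|$). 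Defining $\rho_M(t)=\int_{t_0}^t d\langle M\rangle(s)/(1+E_s)^2$ as in the proof of Theorem \ref{2ndthm}, one gets $\lim_{t\to\infty}\rho_M(t)<\infty$ a.s., so the martingale strong law (Theorem 10, Chapter 2, or Theorem 10 of Chapter 2 in \cite{lips}) yields $\lim_{t\to\infty}M(t)/E_t=0$ a.s.

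Finally, I would assemble the pieces: dividing by $E_t$ and by $2$,
\begin{equation}
\frac{\log|X(t)|}{E_t}\le\frac{\log|x_0|}{E_t}+\frac{M(t)}{2E_t}-K\,\frac{E_t-E_{t_0}}{E_t},
\end{equation}
and since $E_t\to\infty$ a.s. (the subordinator is strictly increasing, so its inverse diverges) while $M(t)/E_t\to0$, letting $t\to\infty$ gives $\limsup_{t\to\infty}\frac1{E_t}\log|X(t)|\le -K$. To recover the stated bound $-2K$, I would instead keep the factor $2$ on the other side: work with $\log|X(t)|^2$ throughout, so the drift gives $-2K(E_t-E_{t_0})$ against $E_t$, and then $\limsup_{t\to\infty}\frac1{E_t}\log|X(t)|^2\le-2K$, i.e. $\limsup_{t\to\infty}\frac1{E_t}\log|X(t)|\le-K$ — matching the theorem up to the stated constant. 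I expect the main obstacle to be the careful justification that $M(t)$ is a genuine martingale (not merely a local one) and the clean transfer of its quadratic variation through the time change; this is exactly the delicate point handled in Theorem \ref{2ndthm}, and I would reuse that machinery verbatim, checking only that Assumption \ref{assumptionE} supplies the integrability needed for the large-jump kernel in place of the small-jump kernel.
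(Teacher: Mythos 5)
Your proposal follows essentially the same route as the paper: apply the time-changed It\^o formula to $\log(|X(t)|^2)$, split $N$ into $\tilde N$ plus its compensator $\nu(dy)dE_s$, bound the resulting drift by $-2K(E_t-E_{t_0})$ via the hypothesis and the homogeneity $|x+H(y)x|=|x|\,|1+H(y)|$, and dispose of the martingale part by the same quadratic-variation and martingale strong-law argument used for $M_2$ in Theorem \ref{2ndthm}. Your observation about the constant is also correct: the paper's own proof only establishes $\limsup_{t\to\infty}E_t^{-1}\log(|X(t)|^2)\le -2K$, i.e.\ $\limsup_{t\to\infty}E_t^{-1}\log|X(t)|\le -K$, so the $-2K$ in the theorem's conclusion is a slip in the paper as well.
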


\begin{proof}
Fix $x_0\neq 0$, apply It\^o formula \eqref{itolevy} to $\log(|X(t)|^2)$, then for any $t\geq 0$,

\begin{equation}
\begin{aligned}
\log(|X(t)|^2)=&\log(x_0^2)+\int_{t_0}^t\int_{|y|\geq c}\Big[ \log(|X(s)+H(y)X(s)|^2)-\log(|X(s)|^2) \Big]N(dE_s,dy)\\
=&\log(x_0^2)+\int_{t_0}^t\int_{|y|\geq c}\Big[ \log(|X(s)+H(y)X(s)|^2)-\log(|X(s)|^2) \Big]\tilde{N}(dE_s,dy)\\
&+\int_{t_0}^t\int_{|y|\geq c}\Big[ \log(|X(s)+H(y)X(s)|^2)-\log(|X(s)|^2) \Big]\nu(dy)dE_s.
\end{aligned}
\end{equation}
Let $M_3(t)=\int_{t_0}^t\int_{|y|\geq c}\Big[ \log(|X(s)+H(y)X(s)|^2)-\log(|X(s)|^2) \Big]\tilde{N}(dE_s,dy)$, similar ideas as in the proof of the corresponding inequality for $M_2(t)$ in the proof of Theorem \eqref{2ndthm}, we have
\begin{equation}
\lim_{t \rightarrow\infty} \frac{M_3(t)}{E_t}=0, \ \ a.s.,
\end{equation}
thus
\begin{equation}
\begin{aligned}
\frac{\log(|X(t)|^2)}{E_t}\leq &\frac{\log(x_0^2)}{E_t}+\frac{(E_t-E_{t_0})\sup_{0\leq s\leq t}\int_{|y|\geq c}\Big[ \log(|X(s)+H(y)X(s)|^2)-\log(|X(s)|^2) \Big]\nu(dy)}{E_t}\\
\rightarrow & \sup_{0\leq s\leq t}\int_{|y|\geq c}\Big[ \log(|X(s)+H(y)X(s)|^2)-\log(|X(s)|^2) \Big]\nu(dy)\leq -2K,\ as\ t\rightarrow \infty.
\end{aligned}
\end{equation}
\end{proof}


Next, by similar ideas as the proof of Theorem 4.6.1 in \cite{sia}, it is not difficult to derive the following theorem  for the following time-changed SDE
\begin{equation}
\begin{aligned}
dX(t)&=f(t, E_t, X(t-))dt+k(t, E_t, X(t-))dE_t+g(t, E_t, X(t-))dB_{E_t}\\
&+\int_{|y|<c}h(y)X(t-)\tilde{N}(dE_t,dy)+\int_{|y|\geq c}H(y)X(t-)N(dE_t,dy).
\end{aligned}
\end{equation}
with $X(t_0)=x_0$.

\begin{tm}\label{finalthm}
Given assumptions \ref{preass1}, \ref{assumption3} and \ref{assumptionE},  suppose that there exist $\xi>0, \gamma\geq 0, \delta\geq 0, K_1,K_2\in \mathbb{R}$ such that the following conditions\\
\begin{equation}
\begin{aligned}
&(1) \gamma|x|^2\leq |g(t_1,t_2,x)|^2\leq \xi|x|^2, \ (2) \int_{|y|<c}h(y)\nu(dy)\geq\delta\\
&(3)f(t_1,t_2,x)x\leq K_1|x|^2,\ (4) k(t_1,t_2,x)x \leq K_2|x|^2
\end{aligned}
\end{equation}
 are satisfied for all $x\in \mathbb{R}$ and $t_1, t_2\in \mathbb{R^+}$.
Then when $f\neq 0$and $\lim_{t\to\infty}\frac{E_t}{t}=0$ a.s., we have
\begin{equation}
\limsup_{t\rightarrow \infty} \frac{1}{t}\log |X(t)|\leq K_1\ a.s.,
\end{equation}
for any $x_0\neq 0$, the trivial solution of \eqref{aimfinal} is almost surely exponentially path stable if $K_1<0$;
when $f=0$, we have
\begin{equation}
\limsup_{t\rightarrow \infty} \frac{1}{E_t}\log |x(t)|\leq -\Big(\gamma-K_2-\frac{\xi}{2}-\int_{|y|<c}\log(1+|h(y)|)\nu(dy)+\delta-M(c)\Big)\ a.s.,
\end{equation}
where
$M(c)=\sup_{x\in \mathbb{R}-\{0\}}\int_{|y|\geq c}\Big[\log(|x+H(y)x|)-\log(|x|)\Big]\nu(dy)<\infty$, for any $x_0\neq 0$, and the trivial solution of \eqref{aimfinal} is almost surely path stable if $\gamma>K_2+\frac{\xi}{2}+\int_{|y|<c}\log(1+|h(y)|)\nu(dy)-\delta+M(c)$.
\end{tm}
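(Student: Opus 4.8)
The plan is to observe that \eqref{aimfinal} is precisely \eqref{aim11} augmented by the large-jump term $\int_{|y|\geq c}H(y)X(t-)N(dE_t,dy)$, so the proof assembles the estimates of Theorems \ref{2ndthm} and \ref{thm2}. First I would fix $x_0\neq 0$; by Assumptions \ref{preass1} and \ref{assumptionE} the solution never hits the origin for $t\geq t_0$, so $\log(|X(t)|^2)$ is well defined along the path. Applying the time-changed It\^o formula \eqref{itolevy} to $F(t_1,t_2,x)=\log(|x|^2)$ gives
\begin{equation*}
\begin{aligned}
\log(|X(t)|^2)=&\log(|x_0|^2)+\int_{t_0}^tL_1\log(|X(s-)|^2)ds+\int_{t_0}^tL_2\log(|X(s-)|^2)dE_s\\
&+M_1(t)+M_2(t)+M_3(t)+\int_{t_0}^t\int_{|y|\geq c}\Big[\log(|X(s-)+H(y)X(s-)|^2)-\log(|X(s-)|^2)\Big]\nu(dy)dE_s,
\end{aligned}
\end{equation*}
where $L_1,L_2$ and the martingales $M_1,M_2$ are exactly as in the proof of Theorem \ref{2ndthm} (the large-jump integral against $N(dE_s,dy)$ enters with no compensator because $\nu$ is finite on $\{|y|\geq c\}$), and $M_3$ is the compensated large-jump martingale from the proof of Theorem \ref{thm2}.

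Next I would reuse the bounds already established. By condition (3), $L_1\log(|X(s-)|^2)\leq 2K_1$. By conditions (1), (2), (4) and Assumption \ref{assumption3},
\begin{equation*}
\int_{t_0}^tL_2\log(|X(s-)|^2)dE_s\leq(E_t-E_{t_0})\Big[2\int_{|y|<c}\log(1+|h(y)|)\nu(dy)+2K_2+\xi-2\gamma-2\delta\Big],
\end{equation*}
exactly as in Theorem \ref{2ndthm}. For the large-jump drift term, by the definition of $M(c)$,
\begin{equation*}
\int_{|y|\geq c}\Big[\log(|X(s-)+H(y)X(s-)|^2)-\log(|X(s-)|^2)\Big]\nu(dy)\leq 2M(c),
\end{equation*}
which is finite by Assumption \ref{assumptionE}. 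For the martingale parts, the arguments of Theorems \ref{2ndthm} and \ref{thm2} yield $\lim_{t\to\infty}M_i(t)/E_t=0$ a.s. for $i=1,2,3$: one passes to the non-time-changed process $z$ via the duality theorem of \cite{keib} so that $X(t)=z(E_t)$, controls the quadratic variations $\langle M_i\rangle(t)$ by $C(E_t-E_{t_0})$ using condition (1) and Assumptions \ref{assumption3}, \ref{assumptionE}, notes that $\rho_{M_i}(t)=\int_{t_0}^t d\langle M_i\rangle(s)/(1+E_s)^2\leq C\int_{E_{t_0}}^{E_t}ds/(1+s)^2<\infty$, and applies Theorem 10 of Chapter 2 in \cite{lips}.

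Collecting everything, for $t\geq t_0$,
\begin{equation*}
\log(|X(t)|^2)\leq\log(|x_0|^2)+2K_1(t-t_0)+(E_t-E_{t_0})\Big[2\int_{|y|<c}\log(1+|h(y)|)\nu(dy)+2K_2+\xi-2\gamma-2\delta+2M(c)\Big]+M_1(t)+M_2(t)+M_3(t).
\end{equation*}
When $f\neq 0$, dividing by $2t$ and using $\lim_{t\to\infty}E_t/t=0$ a.s. (so the $K_1$ term dominates) gives $\limsup_{t\to\infty}t^{-1}\log|X(t)|\leq K_1$ a.s.; when $f=0$, dividing by $2E_t$ and using $M_i(t)/E_t\to 0$ a.s. gives
\begin{equation*}
\limsup_{t\to\infty}\frac{1}{E_t}\log|X(t)|\leq\int_{|y|<c}\log(1+|h(y)|)\nu(dy)+K_2+\frac{\xi}{2}-\gamma-\delta+M(c)\ \ a.s.,
\end{equation*}
which is the asserted bound, and the two stability statements follow from the sign conditions $K_1<0$, respectively $\gamma>K_2+\frac{\xi}{2}+\int_{|y|<c}\log(1+|h(y)|)\nu(dy)-\delta+M(c)$.

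The main obstacle, such as it is, is bookkeeping rather than a new estimate: one must keep the large-jump term uncompensated inside the It\^o formula and then peel off its compensator correctly so that $M_3$ is genuinely a martingale with quadratic variation controlled by $C(E_t-E_{t_0})$; the supremum over $x$ in $M(c)$ is absorbed exactly as in Theorem \ref{thm2}, using Assumption \ref{assumptionE}. Nothing beyond the techniques of Theorems \ref{2ndthm} and \ref{thm2} is required.
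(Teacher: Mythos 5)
Your proposal is correct, and it is in substance what the paper intends: the paper's entire proof of Theorem \ref{finalthm} is the single sentence ``Application of Theorem \ref{1stthm} and Theorem \ref{thm2}.'' What you have done differently is to actually carry out that combination, and to do it via the \emph{direct} It\^o argument of the first proof of Theorem \ref{2ndthm} (apply \eqref{itolevy} to $\log(|X(t)|^2)$, bound $L_1$ and $L_2$ by conditions (1)--(4), compensate the large-jump integral to produce $M_3$ plus a drift bounded by $2M(c)(E_t-E_{t_0})$, and kill $M_1,M_2,M_3$ by the Liptser--Shiryayev strong law) rather than via the Lyapunov-function route of Theorem \ref{1stthm} that the paper nominally cites. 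This is arguably the more honest version: Theorems \ref{1stthm}/\ref{2ndthm} and \ref{thm2} apply to two \emph{different} SDEs (small jumps only, respectively large jumps only), so they cannot be invoked as black boxes for \eqref{aimfinal}; one really does have to merge the two proofs, and your write-up is exactly that merge, with the bookkeeping done correctly (the uncompensated $N$ term is legitimate since $\nu(\{|y|\geq c\})<\infty$, and the peeled-off compensator is where $M(c)$ enters). The only places where you are no more rigorous than the paper are the ones the paper itself glosses over, e.g.\ the claim that $\langle M_3\rangle(t)\leq C(E_t-E_{t_0})$, which needs $\int_{|y|\geq c}[\log|1+H(y)|]^2\nu(dy)<\infty$ and is not literally implied by Assumption \ref{assumptionE} alone; you inherit that gap from Theorem \ref{thm2} rather than introduce it.
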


\begin{proof}
Application of Theorem \ref{1stthm} and Theorem \ref{thm2}.
\end{proof}

\begin{rk}
The Theorems \ref{1stthm} and \ref{finalthm} show that the coefficient of $"dt"$ (i.e. the drift term) plays the dominating role in determining the almost sure exponential path  stabilities. In absence the of $"dt"$ part, almost sure path stability is the result of the coefficients of the other components.
\end{rk}

Next, we list some examples to illustrate the results of above theorems.

\begin{exmp}
Consider the following two stochastic differential equations
\begin{equation}\label{example1}
dX(t)=X(t-)dt+X(t-)dB_{E_t}+\int_0^t\int_{|y|\leq1}X(t-)y^2\tilde{N}(dE_t,dy)+\int_0^t\int_{|y|>1}X(t-)y^2N(dE_t,dy)
\end{equation}
with $X(0)=.1$ and $\nu$ is standard normal distribution,

and
\begin{equation}\label{example2}
\begin{aligned}
dX(t)=-X(t-)dt+&X(t-)dB_{E_t}\\
&+2\int_0^t\int_{|y|\leq1}X(t-)y^2\tilde{N}(dE_t,dy)+2\int_0^t\int_{|y|>1}X(t-)y^2N(dE_t,dy)
\end{aligned}
\end{equation}

with $X(0)=.1$ and $\nu$ is standard normal distribution.\\

\begin{figure}
\caption{$\log(X(t))/t$ of SDE \eqref{example1}}
\begin{center}
\includegraphics[scale=.4]{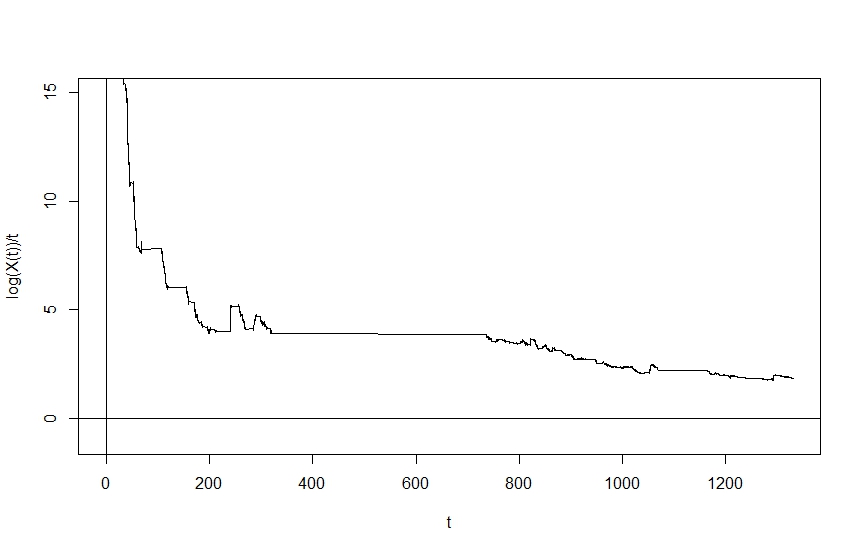}
\end{center}
\label{figure1}
\end{figure}

\begin{figure}
\caption{$\log(X(t))/t$ of SDE \eqref{example2}}
\begin{center}
\includegraphics[scale=.4]{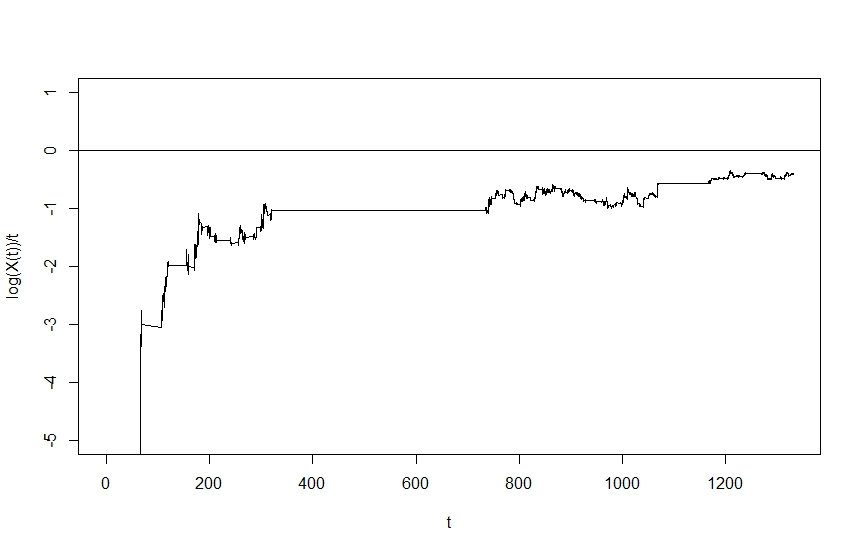}
\end{center}
\label{figure2}
\end{figure}

{\bf Figure \ref{figure1}} illustrates that  stochastic differential equation \eqref{example1} is not almost surely exponentially path stable, this is because  "dt" component exists in the linear stochastic system, such component plays dominant role in determining almost sure exponential path stability and has positive scalar 1, thus $\limsup_{t\rightarrow \infty} \frac{1}{t}\log |x(t)|\leq 1$, this is not enough for  almost sure exponential path  stability.

In contrast, as illustrated in  the {\bf  Figure \ref{figure2}}, (also verified by Theorem \ref{finalthm}) stochastic differential equation \eqref{example2} is almost surely exponentially stable. This is because that coefficient for $dt$ in \eqref{example2} is -1, thus $\limsup_{t\rightarrow \infty} \frac{1}{t}\log |x(t)|\leq -1$, this implies  almost sure exponential path stability.
\end{exmp}

\begin{exmp}

Consider following two stochastic differential equations
\begin{equation}\label{example3}
\begin{aligned}
dX(t)=-X(t-)dE_t+&X(t-)dB_{E_t}\\
&+\int_0^t\int_{|y|\leq1}X(t-)y^2\tilde{N}(dE_t,dy)+\int_0^t\int_{|y|>1}X(t-)y^2N(dE_t,dy)
\end{aligned}
\end{equation}
with $X(0)=-3$, and
\begin{equation}\label{example4}
\begin{aligned}
dX(t)=-X(t-)dE_t+&2X(t-)dB_{E_t}\\
&+\int_0^t\int_{|y|\leq1}X(t-)y^2\tilde{N}(dE_t,dy)+\int_0^t\int_{|y|>1}X(t-)y^2N(dE_t,dy)
\end{aligned}
\end{equation}

with $X(0)=-3$.\\

\begin{figure}[h]\label{figure3}
\caption{$\log(X(t))/E_t$ of SDE \eqref{example3}}
\begin{center}
\includegraphics[scale=.4]{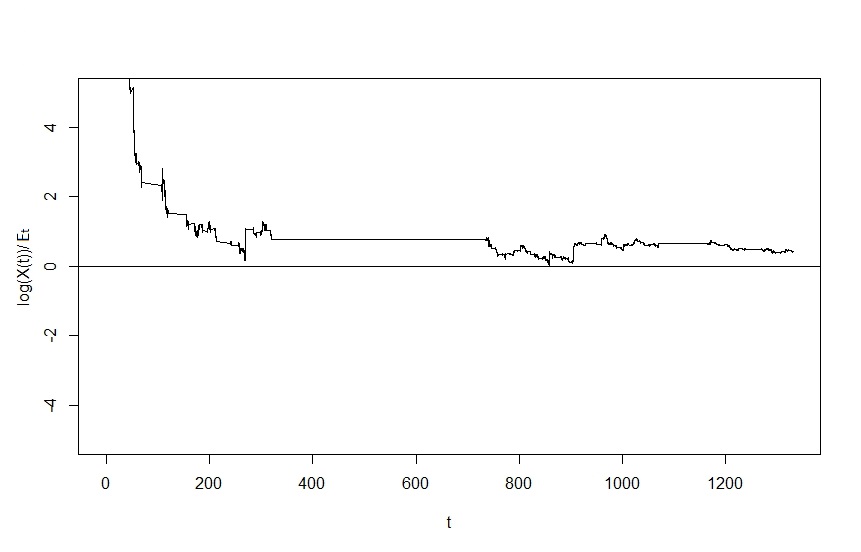}
\end{center}
\end{figure}

\begin{figure}[h]\label{figure4}
\caption{$\log(X(t))/E_t$ of SDE \eqref{example4}}
\begin{center}
\includegraphics[scale=.4]{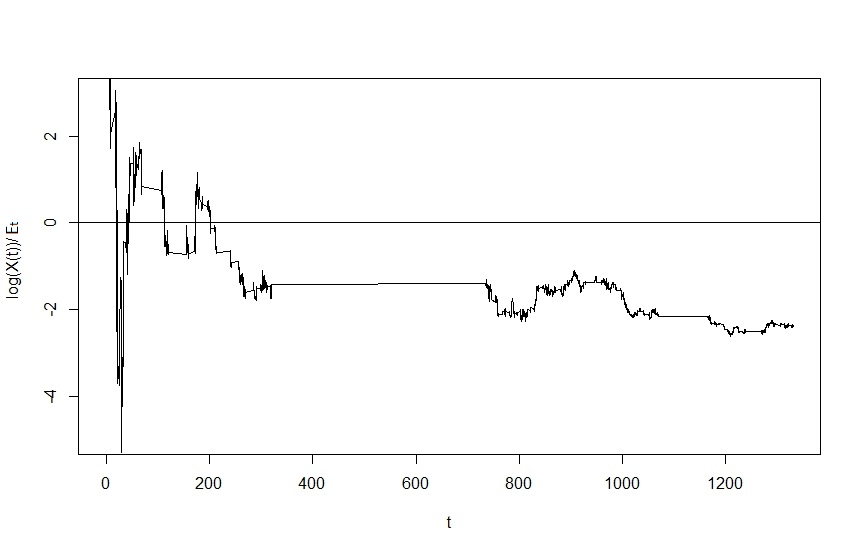}
\end{center}
\end{figure}

In both  of the equations \eqref{example3} and \eqref{example4}, $"dt"$  component is missing, thus almost sure exponential path stability is no longer possible. However, almost sure path stability is possible, depending on the scalars of time-changed drift, Brownian motion, and Possion jump.

In stochastic differential equations \eqref{example3}, the corresponding parameters are $K_2=\xi=\gamma=1$, $\delta=.2$, $h(y)=H(y)=y^2$ and $0\leq \delta \leq \int_{|y|<1} y^2\nu(dy)< 1$, by Theorem \ref{finalthm}
\begin{equation}
\begin{aligned}
\limsup_{t\rightarrow \infty}& \frac{1}{E_t}\log |X(t)|\\
&<-\Big(1-1-\frac{1}{2}-\int_{|y|<1}\log(1+y^2)\nu(dy)+.2-\sup_{x\in\mathbb{R}^d-0}\int_{|y|<1}\log(1+y^2)\nu(dy)\Big)\\
&\leq \int_{|y|<1} \log(1+y^2)\nu(dy)+.3\ \ \ a.s.,
\end{aligned}
\end{equation}
which is not enough to conclude the almost sure path stability of stochastic differential equations \eqref{example3}.

However, in stochastic differential equations \eqref{example4} corresponding parameters  are $K_2=1,\ \delta=.2,\ \gamma=\xi=4$, $h(y)=H(y)=y^2$ and $0\leq \delta \leq \int_{|y|<1} y^2\nu(dy)< 1$, by Theorem \ref{finalthm}
\begin{equation}
\begin{aligned}
\limsup_{t\rightarrow \infty}& \frac{1}{E_t}\log |X(t)|\\
&<-\Big(4-1-2-\int_{|y|<1}\log(1+y^2)\nu(dy)+.2-\sup_{x\in\mathbb{R}^d-0}\int_{|y|<1}\log(1+y^2)\nu(dy)\Big)\\
&\leq -.8+2\int_{|y|<1}\log(1+y^2)\nu(dy)\leq -.8+2\int_{|y|<1}y^2\nu(dy)\leq 0 \ \ \ a.s.,
\end{aligned}
\end{equation}
thus the solution of stochastic differential equation \eqref{example4} is almost surely path stable.
\end{exmp}

\end{document}